\documentclass[a4paper,reqno]{amsart}
\usepackage{amssymb}
\usepackage{latexsym}
\usepackage{amsmath}
\usepackage{euscript}
\usepackage{graphics,color}
\usepackage[all]{xy}
\usepackage[margin=3cm]{geometry}
\usepackage{MnSymbol} % for \udots

      \def\dC{{\mathbb C}}

   \def\dN{{\mathbb N}}   
      \def\dR{{\mathbb R}}

\newcommand{\be}{\begin{equation}}
\newcommand{\ee}{\end{equation}}
\newcommand{\ba}{\begin{eqnarray}}
\newcommand{\ea}{\end{eqnarray}}
\newcommand{\baa}{\begin{eqnarray*}}
\newcommand{\eaa}{\end{eqnarray*}}
\newcommand{\bb}{\begin{thebibliography}}
\newcommand{\eb}{\end{thebibliography}}

\newcounter{my}
\newcommand{\he}%
   {\stepcounter{equation}\setcounter{my}%
   {\value{equation}}\setcounter{equation}0%
   }%
\newcommand{\she}%
   {\setcounter{equation}{\value{my}}%
    }%

\newtheorem{theorem}{Theorem}[section]
\newtheorem{proposition}[theorem]{Proposition}

\newtheorem{lemma}[theorem]{Lemma}
\newtheorem{definition}[theorem]{Definition}
\theoremstyle{definition}

\newtheorem{remark}[theorem]{Remark}

\numberwithin{equation}{section}

\newcommand{\bra}[1]{\langle\,{#1}}
\newcommand{\ket}[1]{\mid{#1}\,\rangle}
\newcommand{\hg}[2]{\,\mbox{}_{#1}F_{ #2}\!}

\newcommand{\argu}[3]{\left(\begin{array}{c} #1\\#2\end{array} ; #3\right)}

\title{Continuous Representations of the Meta Hahn Algebra and Special Functions}
\author{Quentin Labriet and Luc Vinet}
\address{Centre de recherches math\'ematiques, Universit\'e de Montr\'eal, P.O. Box 6128, Centre-ville Station, Montr\'eal (Qu\'ebec), H3C 3J7}

\address{IVADO and Centre de recherches math\'ematiques, Universit\'e de Montr\'eal, P.O. Box 6128, Centre-ville Station, Montr\'eal (Qu\'ebec), H3C 3J7}

\begin{document}

\begin{abstract}
    Infinite families of biorthogonal rational functions are constructed using infinite dimensional representations of the three-generated meta Hahn algebra. Continuous Hahn and continuous dual Hahn polynomials are also recovered along the way using an embedding of the Hahn algebra inside the meta Hahn. Rational functions and polynomials arise as transition coefficients between eigenvectors and generalized eigenvectors, and their algebraic properties follow naturally from the relations of the meta Hahn algebra. Finally, biorthogonality relations are established for the rational functions introduced in the paper. 
\end{abstract}

\maketitle

\section{Introduction}
The present article builds on the program initiated in \cite{TVZ2025MetaHahn} which aims at studying biorthogonal rational functions (BRFs) via a representation theory point of view. Studying orthogonal polynomials (OPs) through representations of the Askey--Wilson algebra, and its specializations, has been extensively explored \cite{GranovskiiZhedanov86,granovskii1992mutual,Koornwinder2023charting}, and the program under development for BRFs follows the same approach using representation theory of the so-called \emph{meta algebras}. While several finite families of BRFs, including some $q$-analogues, have already been explored via finite dimensional representation theory \cite{bussiere2022bispectrality,BBPTTVZZ2026MetaqHahn,CLMTVZ2026MetaRacah,CTVZ2026MetaqRacah}, the present paper focuses on infinite dimensional representations. 

More precisely, we investigate OPs and BRFs that satisfy a bispectral property, i.e., they are solutions of both a recurrence equation and a differential or difference equation. In the case of OPs, two operators generate the so-called bispectral algebra, and in turn bispectral OPs appear as transition coefficients between eigenbases in a representation of this algebra. This is the approach followed in Section \ref{sec:continuousHahn} and \ref{sec:ContinuousDualHahn}. On the other hand, BRFs enjoy a similar algebraic structure but ordinary eigenvalue problems are replaced by \emph{generalized eigenvalue problems} (GEVP), i.e., problems of the form 
\begin{equation}
    X f =\lambda Z f,
\end{equation}
for a pair of operators $(X,Z)$. It was shown by Zhedanov \cite{zhedanov1999biorthogonal} that families of rational functions that are biorthogonal arise as solutions of generalized eigenvalue problem, and the bispectrality of the biorthogonal rational functions of Hahn type was given an algebraic description in \cite{vinet2021unified}. 

Finite families of OPs in the Askey--Wilson Scheme enjoy the bispectral property, with both operators being tridiagonal, and their bispectral algebra is the Askey--Wilson algebra, or its specializations. Following \cite{TVZ2025MetaHahn}, we are interested in BRFs where both operators $X$ and $Z$ are tridiagonal. The systematic study of such BRFs via representation theory was started in \cite{TVZ2025MetaHahn} with an example related to finite dimensional representations of the meta Hahn algebra, first described in \cite{vinet2021unified}. Subsequently, other meta algebras were explored such as the meta Racah \cite{CLMTVZ2026MetaRacah}, meta $q$-Hahn \cite{BBPTTVZZ2026MetaqHahn,bussiere2022bispectrality} and meta $q$-Racah \cite{CTVZ2026MetaqRacah} algebras. Alongside the meta algebras, the Leonard trios approach was developed \cite{CGLMVW2026trios} in the spirit of the Leonard pair theory for bispectral OPs \cite{terwilliger2003introduction}. The connection between the meta and the trio point of view was investigated and made explicit in the Hahn case \cite{CLMV2026trioHahn}. They turn out to be equivalent in this case, and are two faces of the same algebraic object. One advantage of the meta algebras' point of view is that they admit a presentation with only three generators with simple relations. This, in turn,  leads to simple representations where the three generators act bidiagonally. These features have been key to the study of BFRs in the previous works aforementioned. 

In this article, we first classify infinite-dimensional bidiagonal representations of the meta Hahn algebra. This classification yields two families of representations, referred to as being of \emph{positive} or \emph{negative type}, which are almost dual to each other. For each family, we will recover continuous (dual) Hahn polynomials as transition coefficients between two eigenbases, and a family of BRFs expressed in terms of a ${}_3F_2$ hypergeometric function. In both cases, we derive the recurrence relations and difference equations satisfied by the functions from the defining relations of the Hahn and meta Hahn algebras. Since our approach is algebraic in nature, the biorthogonality of the BRFs , which is an analytic feature in this case, are established via analytic methods. 
\smallskip

\paragraph{\bf{Outline of the paper}} The article follows the program developed in \cite{TVZ2025MetaHahn} and is organized along similar lines. Section \ref{sec:metaHahn} introduces the meta Hahn algebra and recall how the Hahn algebra is embedded into it. In Section \ref{sec:representations}, we turn to representation theory and describe two families of infinite dimensional bidiagonal representations. These representations will encode OPs and BRFs together with their algebraic properties. Section \ref{sec:Eigenvectors} is technical in nature and contains explicit expressions for (generalized) eigenvectors for the generators of the meta Hahn algebra together with the necessary action of some other operators. Section \ref{sec:continuousHahn} and \ref{sec:ContinuousDualHahn} describe how to recover continuous Hahn and continuous dual Hahn polynomials from the Hahn algebra and the representations introduced in Section \ref{sec:representations}. Finally, Section \ref{sec:BRFpositive} and \ref{sec:BRFnegative} present two families of BRFs together with their recurrence relations, difference equations and biorthogonality relations. 
\smallskip

\paragraph{\bf{Notations}} Let us fix some conventions. Throughout, $T^{\top}$ denotes the transpose of the operator $T$, acting on linear forms, no Hilbert space adjoint is ever considered. The Pochhammer symbol is $(a)_k=a(a+1)\cdots(a+k-1)$, $(a)_0=1$, and the generalized hypergeometric series are denoted as usual by
\begin{equation}
    \hg{p}{q}\argu{a_1,\dots,a_p}{b_1,\dots,b_q}{z}=\sum_{k\geq 0}\frac{(a_1)_k\cdots (a_p)_k}{(b_1)_k\cdots(b_q)_k}\frac{z^k}{k!}.
\end{equation}

%%%%%%%%%%%%%%%%%%%%%%
%%%%%%%%%%%%%%%%%%%%%%
%%%%%%%%%%%%%%%%%%%%%%
%%%%%%%%%%%%%%%%%%%%%%

\section{The Meta Hahn algebra}\label{sec:metaHahn}
We recall the definition and some properties of the meta Hahn algebra already introduced in \cite{vinet2021unified}. The results exposed in this section are extracted from \cite{TVZ2025MetaHahn}. 

\begin{definition} 
The meta Hahn algebra, denoted $m\mathfrak{H}$, is the algebra generated by three generators $X,\ Z$, and $V$ subject to the relations:
\begin{align}   
    [Z,X]&=Z^2+Z,\label{eq:relationsMetaHahn1}\\
    [X,V]&= \{V,Z\}+V+\xi I,\label{eq:relationsMetaHahn2}\\
    [V,Z]&=2X+\eta I\label{eq:relationsMetaHahn3},
\end{align}
where $[A,B]=AB-BA$ denotes the commutator, $\{A,B\}=AB+BA$ the anticommutator and $\xi,\eta$ are central parameters.   
\end{definition}
The Casimir element of $m\mathfrak{H}$ is given by  
\begin{equation}\label{eq:CasimirMeta}
    Q=\{V,Z^2+Z\}+2(X^2+Z^2)+2\eta X+2(\xi+1)Z.
\end{equation}

The Hahn algebra $\mathfrak{H}$ is generated by two elements $K_1$ and $K_2$ with defining relations
\begin{align}
    [K_1,[K_2,K_1]]&=aK_1^2+bK_1+c_1K_2+d_1I,\\
    [K_2,[K_1,K_2]]&=a\{K_1,K_2\}+bK_2+c_2K_1+d_2I,
\end{align}
where $a,\ b,\ c_1,\ c_2,\ d_1,\ d_2$ are central parameters with $a\neq 0$. A representation of this algebra is given by the bispectral operators of the Hahn polynomials. One central feature of the meta Hahn algebra, which gave its name, is that the Hahn algebra embeds in it. More precisely, the embedding is defined by:
\begin{equation}
    K_1=X+\mu Z, \ \ \ \ K_2=V, \ \ \text{with } \mu\in \dR.
\end{equation}
This can be shown using the defining relations of $m\mathfrak{H}$ \eqref{eq:relationsMetaHahn1}, \eqref{eq:relationsMetaHahn2}, \eqref{eq:relationsMetaHahn3} and the Casimir element \eqref{eq:CasimirMeta} (see \cite{TVZ2025MetaHahn}). A more exhaustive presentation of the algebraic properties of $m\mathfrak{H}$ can be found in \cite{CLMV2026trioHahn} where the link with the Leonard trios approach is made explicit.

%%%%%%%%%%%%%%%%%%%%%%
%%%%%%%%%%%%%%%%%%%%%%
%%%%%%%%%%%%%%%%%%%%%%
%%%%%%%%%%%%%%%%%%%%%%

\section{Infinite dimensional two-diagonal representations}\label{sec:representations}

We now turn to infinite dimensional representations of the meta Hahn algebras $m\mathfrak{H}$. We will present two families of representations where the three generators act in a two-diagonal way, i.e., there is a basis $\ket{n}$ such that the actions of $X,\ Z$ and $V$ only involve $\ket{n},\ \ket{n+1}$ or $\ket{n},\ \ket{n-1}$. We will see in Section \ref{sec:Eigenvectors} that this particular form makes it easy to solve GEVP and EVP. 

Our approach in this article is mostly algebraic, thus the representation space under consideration will be $\dC^\dN$ endowed with the product topology and its continuous dual space will then be the space of finitely supported sequences. The natural basis of this space will be denoted by $\ket{n}$ for $n\in \dN$, and we denote by $\bra{n^\dagger}|$ its dual basis. 

Using the defining relations of $m\mathfrak{H}$ \eqref{eq:relationsMetaHahn1}, \eqref{eq:relationsMetaHahn2}, and \eqref{eq:relationsMetaHahn3}, one can see that the only possibilities for two diagonal representations on $\dC^\dN$ are of the following two forms:
\begin{equation}
    \begin{array}{ccc}
        Z\ket{n}=c_n\ket{n}+a_n\ket{n-1}\\
        X\ket{n}=d_n\ket{n}+b_n\ket{n-1}\\
        V\ket{n}=e_n\ket{n}+f_n\ket{n+1}
    \end{array}
    \text{, or }\ \ \begin{array}{cc}
        Z\ket{n}=c_n\ket{n}+a_n\ket{n+1}\\
        X\ket{n}=d_n\ket{n}+b_n\ket{n+1}\\
        V\ket{n}=e_n\ket{n}+f_n\ket{n-1}
    \end{array}.
\end{equation}
Inspired by the terminology used for discrete series for $\mathfrak{su}(1,1)$, representations of the first case will be called of the \emph{positive type}, and the one of the second case of the \emph{negative type}. Representations of the positive type are explicitly given in the following proposition.
\begin{proposition}\label{prop:PostiveType}
    Two-diagonal irreducible representations of the positive type of $m\mathfrak{H}$ acting on $\dC^\dN$ are described by the following operators
\begin{align}
    Z\ket{n}&=\delta\ket{n}+a_n\ket{n-1},\\
    X\ket{n}&=(n(1+2\delta)+\alpha)\ket{n}+a_n(n+\beta)\ket{n-1},\\
    V\ket{n}&=-(n+\beta)(\beta+n+1)\ket{n}-\frac{(n+1)(2\alpha+\eta+n(1+2\delta))}{a_{n+1}}\ket{n+1},
\end{align}
with $a_n\neq 0$ for all $n\in \dN^*$, $a_0=0$, $\delta=0$ or $-1$, and where $\alpha,\beta \in \dC$ are related to the parameters of the meta Hahn algebra by
\begin{equation}\label{eq:Conditiion for irrep 2} 
\xi=-\beta(2\alpha+\eta-(\beta+1)(1+2\delta)). 
\end{equation}
and such that $2\alpha+\eta +n(1+2\delta)\neq 0$ for all $ n\in \dN$. 
\end{proposition}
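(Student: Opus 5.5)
Starting from the general positive-type ansatz $Z\ket{n}=c_n\ket{n}+a_n\ket{n-1}$, $X\ket{n}=d_n\ket{n}+b_n\ket{n-1}$, $V\ket{n}=e_n\ket{n}+f_n\ket{n+1}$ (with $a_0=b_0=0$ implicit), I will substitute into the three relations \eqref{eq:relationsMetaHahn1}--\eqref{eq:relationsMetaHahn3}, apply both sides to $\ket{n}$, and equate coefficients of $\ket{n+1},\ket{n},\ket{n-1},\ket{n-2}$. This yields a finite system of recurrences for the sequences $a_n,\dots,f_n$, which I will solve in the order dictated by the triangular structure.

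First, relation \eqref{eq:relationsMetaHahn1} involves only $Z$ and $X$, which are both lower bidiagonal in the sense of lowering $n$. The $\ket{n}$ coefficient gives $0=c_n^2+c_n$, so $c_n\in\{0,-1\}$ for each $n$. The $\ket{n-1}$ coefficient gives
\[
c_nb_n+a_nd_{n-1}-d_nb_n\cdot 0-\dots=a_n(c_n+c_{n-1}+1)+\dots,
\]
which, written out, reads $a_n(d_{n-1}-d_n)+b_n(c_n-c_{n-1})=a_n(c_n+c_{n-1}+1)$. Irreducibility will force $a_n\neq0$ for $n\ge1$: if some $a_m=0$, then, since $X$ and $Z$ only lower and $V$ only raises, one checks that $\mathrm{span}\{\ket{n}:n\ge m\}$ is invariant. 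With $a_n\neq0$, the $\ket{n-2}$ coefficient, $a_nb_{n-1}-b_na_{n-1}=a_na_{n-1}$, gives $b_n/a_n-b_{n-1}/a_{n-1}=1$. Hence $b_n=a_n(n+\beta)$. I then need to show that $c_n$ is constant, $c_n\equiv\delta$. I expect this to come from the $V$ relations: relation \eqref{eq:relationsMetaHahn3}, at the $\ket{n+1}$ coefficient, gives $f_n(c_{n+1}-c_n)=0$, and $f_n\ne0$ by irreducibility (otherwise $\mathrm{span}\{\ket{k}:k\le n\}$ is invariant). Once $c_n\equiv\delta$, the $\ket{n-1}$ equation gives $d_n-d_{n-1}=1+2\delta$, so $d_n=n(1+2\delta)+\alpha$.

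Next, relation \eqref{eq:relationsMetaHahn3} gives the following. At $\ket{n}$, it gives $f_{n-1}a_n-a_{n+1}f_n=2d_n+\eta$ (with signs to be fixed carefully), which telescopes from $f_{-1}a_0=0$ to give $a_{n+1}f_n=-(n+1)(2\alpha+\eta+n(1+2\delta))$. This matches the statement, and it shows that the nondegeneracy condition $2\alpha+\eta+n(1+2\delta)\neq0$ is exactly the condition $f_n\neq0$. At $\ket{n-1}$, it gives $a_n(e_n-e_{n-1})=2b_n$, i.e. $e_n-e_{n-1}=2(n+\beta)$ up to sign, so $e_n=-(n+\beta)(n+\beta+1)+\text{const}$. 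The constant is fixed by relation \eqref{eq:relationsMetaHahn2}.

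Finally, I will impose relation \eqref{eq:relationsMetaHahn2}. Its $\ket{n+1}$, $\ket{n}$ and $\ket{n-1}$ coefficients should be consequences of the data above, apart from one scalar condition. The $\ket{n}$ coefficient at $n=0$ (using $a_0=0$) should give $\xi$ in terms of $\alpha,\beta,\delta,\eta$ and the additive constant in $e_n$. The main obstacle is this bookkeeping: I must check that all the remaining coefficient identities hold for every $n$, with no extra constraint. I must also check that the additive constant in $e_n$ can be absorbed into $\beta$ (or is forced to vanish), producing exactly $\xi=-\beta(2\alpha+\eta-(\beta+1)(1+2\delta))$. I would verify this by direct polynomial identity in $n$. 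Conversely, irreducibility of the resulting module follows from $a_n\neq0$ and $f_n\neq0$: any nonzero invariant subspace contains, after applying powers of $Z-\delta$, the vector $\ket{0}$, and then powers of $V$ generate everything (in the product topology).
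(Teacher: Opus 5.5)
The necessity half of your proposal is the paper's own argument: the same coefficient comparisons, in essentially the same order. The converse step, however, does not work.

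If $x=\sum_n x_n\ket{n}$ has infinite support, the $n$-th component of $(Z-\delta)^k x$ is $a_{n+1}\cdots a_{n+k}\,x_{n+k}$. So $(Z-\delta)^kx$ still has infinite support and never becomes a multiple of $\ket{0}$. Your argument therefore only shows that a closed invariant subspace containing a nonzero finitely supported vector is all of $\dC^\dN$.

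The missing case really occurs. Take $\delta=0$ and $2\alpha+\eta=2\beta+2$. Then $x=\sum_{n\ge0}(-1)^n(a_1\cdots a_n)^{-1}\ket{n}$ satisfies $Zx=-x$, $Xx=(\alpha-\beta-1)x$ and $Vx=-\beta(\beta+1)x$. Hence $\dC x$ is a proper closed invariant subspace. For instance $a_n=1$ for $n\ge1$, $\alpha=\beta=0$, $\eta=2$, $\xi=0$ meets every condition in the statement. So these formulas need not define irreducible representations on $\dC^\dN$.

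The paper only asserts this direction as ``straightforward''. What it, and you, actually prove is that an irreducible two-diagonal representation of positive type must have the stated form. A correct converse needs either to work on finitely supported vectors, where your argument is fine, or extra restrictions on the parameters.

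Within the necessity part, three points need fixing.

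\textbf{Signs in \eqref{eq:relationsMetaHahn1}.} Your displayed coefficient equations have the sign of $[Z,X]$ reversed. They should read $b_na_{n-1}-a_nb_{n-1}=a_na_{n-1}$ and $a_n(d_n-d_{n-1})+b_n(c_{n-1}-c_n)=a_n(c_n+c_{n-1}+1)$. The conclusions you draw from them are nevertheless the correct ones.

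\textbf{The case $a_m=0$.} Invariance of the closed span of $\{\ket{n}\}_{n\ge m}$ also needs $b_m=0$. This does not follow from the triangular shape alone. It comes from the $\ket{n-1}$ coefficient of \eqref{eq:relationsMetaHahn3}, namely $a_n(e_{n-1}-e_n)=2b_n$.

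\textbf{The constant in $e_n$.} It cannot be absorbed into $\beta$, which is already fixed by $b_n=a_n(n+\beta)$. It is forced by the $\ket{n-1}$ coefficient of \eqref{eq:relationsMetaHahn2}, $b_n(e_n-e_{n-1})=a_n(e_n+e_{n-1})$, i.e.\ $e_n+e_{n-1}=-2(n+\beta)^2$. This gives $e_0=-\beta(\beta+1)$. The $\ket{n}$ coefficient then reduces, for every $n$, to $\xi=-\beta(2\alpha+\eta)-(1+2\delta)e_0$. So \eqref{eq:relationsMetaHahn2} imposes two scalar conditions, not one, and together they give exactly the stated $e_n$ and $\xi$.
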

\begin{proof}
    The proof will follow closely what was done for the finite dimensional situation in \cite{TVZ2025MetaHahn}. It is straightforward to see that this formulas define an irreducible representation of $m\mathfrak{H}$. Let us prove that they all have the given expressions. Start from 
    \begin{align}
        Z\ket{n}&=c_n\ket{n}+a_n\ket{n-1},\\
        X\ket{n}&=d_n\ket{n}+b_n\ket{n-1}.
    \end{align}
    Together with relation \eqref{eq:relationsMetaHahn1} one arrives at 
    \begin{equation}
        \frac{b_{n-1}}{a_{n-1}}-\frac{b_n}{a_n}=1,
    \end{equation}
    and then obtains
    \begin{equation}
        b_n=a_n (n+\beta). 
    \end{equation}
    Irreducibility imposes $a_n\neq 0$ for all $n\geq 0$. One also have $c_n(c_n+1)=0$ leading to $c_n=0$ or $-1$. Introduce
    \begin{equation}
         V\ket{n}=e_n\ket{n}+f_n\ket{n+1}.
    \end{equation}
    Using relation \eqref{eq:relationsMetaHahn3}, one finds that $f_n(c_n-c_{n+1})=0$ and since irreducibility implies $f_n\neq 0$ for all $n\geq 0$ we set $c_n=\delta$ with $\delta=0$ or $-1$. Then relation \eqref{eq:relationsMetaHahn1} leads to 
    \begin{equation}
        d_n=n(1+2\delta)+\alpha. 
    \end{equation}
    Next, using relation \eqref{eq:relationsMetaHahn3} we have 
    \begin{equation}
        f_n=f_{n-1}-2d_n-\eta,
    \end{equation}
    leading to 
    \begin{equation}
        f_n=-\frac{(n+1)(2\alpha+\eta+n(1+2\delta))}{a_{n+1}}.
    \end{equation}
    Finally, the formula for $e_n$ is obtained using relation \eqref{eq:relationsMetaHahn3} which leads to
    \begin{equation}
        e_n=-n(n-1+2\beta)+e_0. 
    \end{equation}
    Relation \eqref{eq:relationsMetaHahn2} will then enforce the condition \eqref{eq:Conditiion for irrep 2} and also fixes $e_0=-\beta(\beta+1)$. 
\end{proof}

Similarly, representations of the negative type are explicitly given in the following proposition whose proof is the same as for the positive type.

\begin{proposition}\label{prop:negativeType}
Two-diagonal irreducible representations of the negative type of $m\mathfrak{H}$ acting on $\dC^\dN$ are described by the following operators
\begin{align}
    Z\ket{n}&=\delta\ket{n}+a_n\ket{n+1}\\
    X\ket{n}&=(-n(1+2\delta)-\alpha)\ket{n}+a_n(-n+\beta)\ket{n+1}\\
    V\ket{n}&=(n-\beta)(\beta-n+1)\ket{n}-\frac{n(2\alpha-\eta+(n-1)(1+2\delta))}{a_{n-1}}\ket{n-1},
\end{align}
with $a_n\neq 0$ for all $n\in \dN$, $\delta=0$ or $-1$, $\alpha,\beta \in \dC$  are related to the parameters of the meta Hahn algebra by
\begin{equation}\label{eq:Conditiion for irrep 1} 
\xi=(\beta+1)(2\alpha-\eta+\beta(1+2\delta)). 
\end{equation}
and such that $2\alpha-\eta +(n-1)(1+2\delta)\neq 0 $ for all $ n\in \dN$. 
\end{proposition}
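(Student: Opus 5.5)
The plan is to mirror the proof of Proposition \ref{prop:PostiveType}, now with raising $Z,X$ and lowering $V$. Start from the general negative-type ansatz
\begin{equation*}
Z\ket{n}=c_n\ket{n}+a_n\ket{n+1},\quad X\ket{n}=d_n\ket{n}+b_n\ket{n+1},\quad V\ket{n}=e_n\ket{n}+f_n\ket{n-1},
\end{equation*}
with the convention $f_0=0$. Irreducibility on $\dC^\dN$ forces $a_n\neq0$ for all $n\in\dN$, since otherwise the span of $\ket{k}$, $k\geq m$, would give a proper invariant subspace. Conversely, once the explicit formulas are obtained, a direct substitution into \eqref{eq:relationsMetaHahn1}--\eqref{eq:relationsMetaHahn3} shows that they define a representation; irreducibility follows from the bidiagonal structure together with $a_n\neq 0$ and $f_n\neq0$ for $n\geq1$.

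First I would expand relation \eqref{eq:relationsMetaHahn1} in the basis. The $\ket{n}$ component gives $c_n(c_n+1)=0$. The $\ket{n+1}$ component involves $a_n(d_{n+1}-d_n)$ and $b_n(c_n-c_{n+1})$, and the $\ket{n+2}$ component gives $a_nb_{n+1}-b_na_{n+1}=a_na_{n+1}$, that is, $b_{n+1}/a_{n+1}-b_n/a_n=1$. Hence $b_n=a_n(n+\beta')$; to match the statement I would write $b_n=a_n(-n+\beta)$, absorbing the sign into the normalization so that the formula takes the stated form. Next I would use \eqref{eq:relationsMetaHahn3}. Its $\ket{n+1}$ component, for $n\geq0$, yields $f_{n+1}a_n\propto(c_{n+1}-c_n)$. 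Since $f_{n+1}\neq0$ by irreducibility, this gives $c_n=\delta$ constant with $\delta\in\{0,-1\}$. The $\ket{n+1}$ component of \eqref{eq:relationsMetaHahn1} then gives $d_{n+1}-d_n=-(1+2\delta)$, so $d_n=-n(1+2\delta)-\alpha$.

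Returning to \eqref{eq:relationsMetaHahn3}, the diagonal component gives a first-order recursion for $a_{n-1}f_n$ of the form $a_nf_{n+1}-a_{n-1}f_n=-(2d_n+\eta)$. Summing from $f_0=0$ gives $a_{n-1}f_n=-n(2\alpha-\eta+(n-1)(1+2\delta))$, up to the sign conventions to be checked. The remaining $\ket{n+1}$ component of \eqref{eq:relationsMetaHahn3} then gives a second-order difference equation for $e_n$: $e_{n+1}-e_n$ is affine in $n$, so $e_n$ is quadratic, namely $e_n=e_0+n(\ldots)$. Finally I would impose \eqref{eq:relationsMetaHahn2}. Its off-diagonal components should be automatically satisfied or should fix the linear coefficient of $e_n$ in terms of $\beta$. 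The diagonal component at $n=0$, where $f_0=0$ kills one term, fixes $e_0=-\beta(\beta+1)$ and yields the constraint \eqref{eq:Conditiion for irrep 1} on $\xi$, so that $e_n=(n-\beta)(\beta-n+1)$. The nonvanishing condition on $2\alpha-\eta+(n-1)(1+2\delta)$ is exactly $f_n\neq0$ for $n\geq1$, which is needed for irreducibility.

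The main obstacle is bookkeeping rather than conceptual: tracking signs and index shifts so that the free constants come out as exactly $\alpha,\beta$ in the stated normalization, and verifying that \eqref{eq:relationsMetaHahn2} is consistent for all $n$ and not only at $n=0$. For the latter I would use the Casimir \eqref{eq:CasimirMeta}. Acting on $\ket{0}$ computes its scalar value, and centrality then reduces the consistency check for general $n$ to the $n=0$ case.
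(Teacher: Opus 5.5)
Your strategy is the one the paper intends: expand the three defining relations componentwise in the raising/lowering ansatz, as for Proposition~\ref{prop:PostiveType} (the paper only says the proof is the same as in the positive case). But several steps, as written, are wrong, and two of them would fail as stated.

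\emph{Signs of $b_n$ and $f_n$.} $ZX\ket{n}$ contains $b_na_{n+1}\ket{n+2}$ and $XZ\ket{n}$ contains $a_nb_{n+1}\ket{n+2}$. So the $\ket{n+2}$ component of $[Z,X]=Z^2+Z$ is $b_na_{n+1}-a_nb_{n+1}=a_na_{n+1}$, the opposite of what you wrote. Hence $b_{n+1}/a_{n+1}-b_n/a_n=-1$, and $b_n=a_n(\beta-n)$ comes out directly. Your fallback of absorbing the sign into the normalization does not work. A rescaling $\ket{n}\mapsto\lambda_n\ket{n}$ multiplies $a_n$ and $b_n$ by the same factor $\lambda_n/\lambda_{n+1}$, so $b_n/a_n$ is basis-independent, and no constant turns $n+\beta'$ into $\beta-n$. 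Had your equation been right, the proposition would be false.

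The same slip occurs in the diagonal component of $[V,Z]=2X+\eta I$, which is $a_nf_{n+1}-a_{n-1}f_n=+(2d_n+\eta)$. With $d_n=-n(1+2\delta)-\alpha$ and $f_0=0$ this sums to the stated $a_{n-1}f_n=-n(2\alpha-\eta+(n-1)(1+2\delta))$. Your version gives the opposite sign, and no convention is left to adjust once $\alpha$ is defined by $d_0=-\alpha$. Relatedly, the equation forcing $c_n=\delta$ is $f_{n+1}(c_{n+1}-c_n)=0$, the $\ket{n}$ component of $[V,Z]\ket{n+1}$, with no factor $a_n$. The $\ket{n+1}$ component, $a_n(e_{n+1}-e_n)=2b_n$, is first order and already fixes all of $e_n$ except $e_0$.

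\emph{The relation $[X,V]=\{V,Z\}+V+\xi I$.} Its diagonal component at $n=0$ is the single equation $(\beta+1)(2\alpha-\eta)=(1+2\delta)e_0+\xi$, which cannot fix both $e_0$ and $\xi$. What fixes $e_0$ is the $\ket{n+1}$ component, $b_n(e_n-e_{n+1})=a_n(e_n+e_{n+1})$, i.e.\ $(\beta-n)(e_n-e_{n+1})=e_n+e_{n+1}$. Together with $e_{n+1}-e_n=2(\beta-n)$ this gives $e_n=-(\beta-n)(\beta-n+1)$ for every $n$. The $\ket{n-1}$ component holds automatically, since $d_{n-1}-d_n=1+2\delta$.

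Your Casimir argument for general $n$ is circular. Centrality of $Q$ is derived from all three relations, including the one you are trying to verify, so it tells you nothing about your operators until that relation is known. Check it directly instead. With $g_n=a_{n-1}f_n$ the diagonal component reads $g_n(\beta-n)-g_{n+1}(\beta-n+1)=(1+2\delta)e_n+\xi$. The $n$-dependence cancels identically, leaving $\xi=(\beta+1)(2\alpha-\eta+\beta(1+2\delta))$.

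A legitimate reduction to $\ket{0}$ does exist. By the Jacobi identity and the other two relations, $R=[X,V]-\{V,Z\}-V-\xi I$ commutes with $Z$. A tridiagonal operator commuting with $Z$, when all $a_n\neq0$, has the form $rI+\kappa(Z-\delta I)$. So $R\ket{0}=0$ forces $R=0$.

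\emph{The argument for $a_n\neq0$.} You name the wrong invariant subspace. The closed span of the $\ket{k}$, $k\ge m$, is always stable under the raising operators $Z,X$, and it is $V$-stable exactly when $f_m=0$; that is the argument for $f_m\neq0$. If instead $a_m=0$, then $a_m(e_{m+1}-e_m)=2b_m$ gives $b_m=0$, and $\mathrm{span}\{\ket{0},\dots,\ket{m}\}$ is a finite-dimensional invariant subspace.
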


\begin{remark}\label{rem:Duality}
    The meta Hahn algebra is not stable under taking transpose, however the operators $Z^\top,V^\top,-X^\top$ satisfy relations \eqref{eq:relationsMetaHahn1}-\eqref{eq:relationsMetaHahn3} with parameters $\xi,-\eta$. Taking this fact into account, representations of the negative type are readily seen to be obtained from the positive type by transposition up to the change of  parameters $a_n\to a_{n+1}, \beta\to \beta+1, \alpha\to \alpha$.
\end{remark}
%%%%%%%%%%%%%%%%%%%%%%
%%%%%%%%%%%%%%%%%%%%%%
%%%%%%%%%%%%%%%%%%%%%%
%%%%%%%%%%%%%%%%%%%%%%

\section{(Generalized) Eigenvectors}\label{sec:Eigenvectors}
We will now describe here the solutions in the representation space of different GEVPs and EVPs. OPs and BRFs will then be construct as transition coefficients between different families of (generalized) eigenvectors. More precisely we will be interested in the eigenvectors of $V$, $W=X+\mu Z$ and the GEVP:
\begin{equation}\label{eq:Def GEVP}
  Xv=\lambda Zv.
\end{equation}

We will denote the eigenvectors of $V$ by $\ket{v_x^\pm}$ with the exponent $\pm$ referring to the positive or negative type of the representation under consideration and the index $x$ will be either continuous or discrete. Similarly, the eigenvectors of $W$ will be denoted $\ket{w_x^\pm}$ and the generalized eigenvectors of the GEVP \eqref{eq:Def GEVP} will be denoted $\ket{d_x^\pm}$. 

For future use, we also provide some needed actions of the operators $X,\ Z$ and $V$ on the different eigenbases. This will be necessary to study the properties of OPs and BFRs constructed in the next sections.  The statements of this section are all obtained through a direct computation.

\subsection{Representations of the positive type}

First, we consider a representation of the positive type as described in Proposition \ref{prop:PostiveType} and we find the following eigenvectors.

For $V$, the eigenvalues are of the form $\rho_n=-(n+\beta)(n+\beta+1)$ for all $n\in \dN$ and the eigenvectors are given by
\begin{equation}
\ket{v^+_n}=v_0^+(n)\sum_{k=n}^\infty (-1)^{k-n}\binom{k}{n}\frac{(2\alpha+\eta+(1+\delta)n+\delta (k-1))_{k-n}}{a_{n+1}\cdots a_k(2\beta+2+2n)_{k-n}}\ket{k}.
\end{equation}
The family $\{\ket{v^+_n}\}_{n\in \dN}$ forms a basis of $\dC^\dN$ and its dual basis $\bra {v_n^+}^\dagger|$ is given by
\begin{equation}\label{eq:Expression en*}
    \bra {v_n^+}^\dagger|=\frac{(2\alpha+\eta+\delta(n-1))_n}{v_0^+(n)a_1\cdots a_n (n+2\beta+1)_n}\sum_{k=0}^n a_1\cdots a_k \binom{n}{k}\frac{(n+2\beta+1)_k}{(2\alpha+\eta+\delta(k-1))_k}\bra k^\dagger|. 
\end{equation}
Regarding $W=X+\mu Z$, the eigenvalues are $\theta(x)=x\in \dC$ and the eigenvectors are given by
\begin{equation}\label{eq:Expression wx}
\ket{w_x^+} = w^+_0(x)\sum_{k=0}^\infty  \frac{(x -\delta \mu -\alpha -(1+\delta)(k-1))_k}{a_1\cdots a_k(\mu+\beta+1)_k}\ket{k}. 
\end{equation}
For $\lambda(x)=x\in \dC$, the generalized eigenvectors $|d^+_x\rangle$ of the GEVP \eqref{eq:Def GEVP} are explicitly given by
\begin{equation}\label{eq:Expression dx+}
\ket{d^+_x} = d^+_0(x) \sum_{k\geq 0}(-1-2\delta)^k\frac{(-\delta x+(1+2\delta)\alpha)_k}{a_1\cdots a_k(x+\beta+1)_k} \ket{k}. 
\end{equation}

From now on, and for the rest of the paper, we choose the normalization constants to be $1$, i.e., $v_0^+(n)=w_0^+(x)=d_0^+(x)=1$. We now collect the actions of the operators on the different eigenbases that will be necessary for future use. We start with the actions of $X, Z$ on the vectors $\ket{v_n^+}$ is given by the following tridiagonal formulas
\begin{align}
    X\ket{v_n^+} & = X_{n-1,n}^{(v^+)}\ket{v_{n-1}^+}+X_{n,n}^{(v^+)}\ket{v_n^+}+X_{n+1,n}^{(v^+)}\ket{v_{n+1}^+},\\
    Z\ket{v_n^+} & = Z_{n-1,n}^{(v^+)}\ket{v_{n-1}^+}+Z_{n,n}^{(v^+)}\ket{v_n^+}+Z_{n+1,n}^{(v^+)}\ket{v_{n+1}^+},
\end{align}
with 
\begin{align*}
    X_{n-1,n}^{(v^+)}&=a_n(n+\beta),\\
    X_{n,n}^{(v^+)}&=-\frac{\eta}{2},\\
    X_{n+1,n}^{(v^+)}&=\frac{(n+1)(2\alpha+\eta+(1+2\delta)n)(2\beta+n+1)(2\alpha+\eta-(1+2\delta)(2\beta+2))}{2a_{n+1}(2\beta+2n+1)(2\beta+2n+2)(2\beta+2n+3)},\\
    \\
    Z_{n-1,n}^{(v^+)}&= a_n,\\
    Z_{n,n}^{(v^+)} &  = \delta-\frac{(n+1)(2\alpha+\eta+(1+2\delta)n)}{2(\beta+n+1)}+\frac{n(2\alpha+\eta+(1+2\delta)(n-1))}{2(\beta+n)},\\
    Z_{n+1,n}^{(v^+)}& = -\frac{X_{n,n+1}}{\beta+n+1}=-\frac{(n+1)(2\alpha+\eta+(1+2\delta)n)(2\beta+n+1)(2\alpha+\eta-(1+2\delta)(2\beta+n+2))}{4a_{n+1}(2\beta+2n+1)(\beta+n+1)^2(2\beta+2n+3)}.
\end{align*}
This leads to the following action of $W=X+\mu Z$
\begin{equation}
    W\ket{v_n^+}=W_{n-1,n}^{(v^+)}\ket{v_{n-1}^+}+W_{n,n}^{(v^+)}\ket{v_n^+}+W_{n+1,n}^{(v^+)}\ket{v_{n+1}^+},
\end{equation}
where
\begin{align}
    W_{n-1,n}^{(v^+)}&=a_n(n+\beta+\mu),\\
    W_{n,n}^{(v^+)}&=-\frac{\eta}{2}+\mu\left( \delta-\frac{(n+1)(2\alpha+\eta+(1+2\delta)n)}{2(\beta+n+1)}+\frac{n(2\alpha+\eta+(1+2\delta)n+1)}{2(\beta+n)}\right),\\
    W_{n+1,n}^{(v^+)}&=\frac{(n+1)(\beta-\mu+n+1)(2\alpha+\eta+(1+2\delta)n)(2\beta+n+1)(2\alpha+2\beta+\eta-(1+2\delta)(n+2))}{4a_{n+1}(2\beta+2n+1)(\beta+n+1)^2(2\beta+2n+3)}.
\end{align}
The action of $V$ on the vectors $\ket{w_x^+}$ can be expressed as follows
\begin{equation}\label{eq:tridiagonalAction V positive case}
V\ket{w_x^+}=V_{x-1,x}^{(w^+)}\ket{w_{x-1}^+} +V_{x,x}^{(w^+)}\ket{w_x^+}+ V_{x+1,x}^{(w^+)}\ket{w_{x+1}^+},
\end{equation}
with
\begin{align*}
    V_{x-1,x}^{(w^+)}&=(\alpha-\delta(\beta+1)-x)(\alpha+\eta+\mu+x-(1+\delta)(\beta+1)),\\
    V_{x,x}^{(w^+)}&=-V_{x,x-1}-V_{x,x+1}-\beta(\beta+1),\\
    V_{x+1,x}^{(w^+)}&=-(\mu-\alpha+x-(1+\delta)(\beta+1))(\alpha+\beta+\eta+x-\delta(\beta+1)).
\end{align*}
The action of $Z$ and $VZ$ on $\ket{d_x^+}$ are all tridiagonal with the following explicit expressions
\begin{align}
Z\ket{d_x^+}&=Z^{(d^+)}_{x,x} \ket{d_x^+}+ Z^{(d^+)}_{x+1,x}\ket{d_{x+1}^+},\label{eq:ZactionOn dx PositiveCase}\\
VZ\ket{d_x^+}&=VZ^{(d^+)}_{x-1,x}\ket{d^+_{x-1}}+VZ^{(d^+)}_{x,x} \ket{d_x^+}+ VZ^{(d^+)}_{x+1,x}\ket{d_{x+1}^+},\label{eq:VZactionOn dx PositiveCase}
\end{align}
with
\begin{align*}
    Z^{(d^+)}_{x,x}=-1,&\ \ 
    Z^{(d^+)}_{x+1,x}=-\frac{\alpha-x-(1+\delta)(\beta+1)}{\beta+x+1},
\end{align*}
and
\begin{align*}
    VZ^{(d^+)}_{x-1,x}&=\frac{x(x+1)(\alpha-x-(1+\delta)(\beta+1))}{\beta+x+1} ,\\
    VZ^{(d^+)}_{x,x}&=2\alpha\beta+\beta\eta+x\eta+2x^2-(1+2\delta)\beta(\beta+1),\\
    VZ^{(d^+)}_{x+1,x}&=-(\alpha+\eta+x-(1+\delta)(\beta+1))(\beta+x).
\end{align*}

\subsection{Representations of the negative type}
We now consider a representation of the negative type as described in Proposition \ref{prop:negativeType} and we find the following eigenvectors.

For $V$, the eigenvalues can be written $\rho(x)=(x-\beta)(\beta-x+1)$ with $x\in \dC$ and the eigenvectors are:
\begin{equation}\label{eq:ExpressionVx}
\ket{v_x^-} =v_0^-(x)\sum_{k=0}^\infty \frac{a_0\cdots a_{k-1}}{k!} \frac{(x-k+1)_k (x-2\beta-1)_k}{(2\alpha-\eta+\delta(k-1))_k} \ket{k}.
\end{equation}
For $W=X+\mu Z$, the eigenvalues can be written $\theta_n=-n(1+2\delta)-\alpha+\delta\mu$ and the eigenvectors are 
\begin{equation}\label{eq:ExpressionFn}
\ket{w_n^-}= w_0^-(n)\sum_{k=n}^\infty a_n\cdots a_{k-1}(-1-2\delta)^{k-n} \frac{(-\mu-\beta)_{k}}{(-\mu-\beta)_n (k - n)!} \ket{k} . 
\end{equation}
The family $\{\ket{w_n^-}\}_{n\in \dN}$ is a basis of $\dC^\dN$, and its dual basis $\bra {w_n^-}^\dagger|$ is then given by
\begin{equation}\label{eq:ExpressionFn*}
    \bra {w_n^-}^\dagger|= \frac{(-\mu-\beta)_n(1+2\delta)^na_0\cdots a_{n-1}}{n!w_0^-(n)}\sum_{k=0}^n \frac{(-(1+2\delta))^k(-n)_k}{a_0\cdots a_{k-1}(-\mu-\beta)_k}\bra k^\dagger|.
\end{equation}
Regarding the GEVP \eqref{eq:Def GEVP}, there are no non zero solutions when $\delta=0$, except if $\alpha=k$ for some $k\in \dN$. For sake of simplicity we exclude this situation from the analysis. For $\delta=-1$, the possible eigenvalues are $\lambda_n=\alpha-n$ and the generalized eigenvectors $\ket{d^-_x}$ by:
\begin{equation}\label{eq:ExpressionDn-}
\ket{d^-_n} = d_0^-(n)\sum_{k=n}^\infty a_n\cdots a_{k-1}\frac{(\alpha-\beta)_{k-n}}{(k-n)!}\ket{k}.
\end{equation}
The family $\{ \ket{d_n^-}\}_{n\in \dN}$ is a basis of $\dC^\dN$, however we will not be interested in its dual basis since its basis vector will not satisfy the GEVP $\bra v |X^\top=\lambda_n \bra v|Z^\top$. Instead, we will consider the vectors $\bra {d_n^-}^*|$ which are defined as the solutions of the GEVP 
\begin{equation}\label{eq:TransposeGEVP}
    \bra {d_n^-}^*|X^\top=\lambda_n\bra {d_n^-}^*|Z^\top.
\end{equation}
Solving this equation leads to the following expression for $\bra {d_n^-}^*|$
\begin{equation}\label{eq:dualityd_n}
    \bra {d_n^-}^*|=d_0^*(n)\sum_{k=0}^n\frac{(-n)_k}{a_0\cdots a_{k-1}(\alpha-\beta-n)_k}\bra k^\dagger|.
\end{equation}
The family  $\bra {d_n^-}^*|$ is in fact the dual basis of the family $Z\ket{d_n^-}$, in the sense that 
\begin{equation}\label{eq:InnerProductGEVP}
    \bra {d_n^-}^*|Z\ket{d_m^-}=\kappa_n \delta_{nm},
\end{equation}
with $\kappa_n=-\frac{n!}{(1+\beta-\alpha)_n}$. This fact can be checked through a direct computation but was proven abstractly in \cite{TVZ2025MetaHahn}. We normalize $\bra {d_n^-}^*|$ so that $\kappa_n=1$ leading to 
\begin{equation}\label{eq:ExpressionDn-*}
    \bra {d_n^-}^*|=\frac{a_{n-1}\cdots a_0(-1)^{n+1}(\alpha-\beta-n)_n}{n!d_0^-(n)}\sum_{k=0}^n\frac{(-n)_k}{a_0\cdots a_{k-1}(\alpha-\beta-n)_k}\bra k^\dagger|.
\end{equation}

As for the positive type representations, we will choose the normalization constants to be $1$, i.e., $v_0^-(x)=w_0^-(n)=d_0^-(n)=1$. We describe now the actions needed in the rest of the paper, starting with the actions of $Z$ and $X$ on the vectors $\ket{v_x^-}$. 
\begin{align}
    Z\ket{v_x^-} &=Z_{x-1,x}^{(v^-)}\ket{v_{x-1}^-} +Z_{x,x}^{(v^-)} \ket{v_x^-} +Z_{x+1,x}^{(v^-)} \ket{v_{x+1}^-},\label{eq:Zaction on vx}\\
    X\ket{v_x^-} &=X_{x-1,x}^{(v^-)}\ket{v_{x-1}^-} +X_{x,x}^{(v^-)} \ket{v_x^-} +X_{x+1,x}^{(v^-)} \ket{v_{x+1}^-},\label{eq:Xaction on vx}
\end{align}
with 
\begin{align*}
    Z_{x-1,x}^{(v^-)}&=- \frac{x(2\alpha-\eta+(1+2\delta)(2\beta -x+1))}{(2\beta-2x+1)(2\beta-2x+2)},\\
    Z_{x,x}^{(v^-)}&= \delta - Z_{x-1,x}-Z_{x+1,x},\\
    Z_{x+1,x}^{(v^-)}&= -\frac{(2\alpha-\eta +(1+2\delta)x)(2\beta-x+1)}{(2\beta-2x)(2\beta-2x+1)},
\end{align*}
and
\begin{align*}
    X_{x-1,x}^{(v^-)}&= \frac{x(2\alpha-\eta+(1+2\delta)(2\beta -x+1))}{2(2\beta-2x+1)},\\
    X_{x,x}^{(v^-)}&= -\alpha - X_{x-1,x}-X_{x+1,x},\\
    X_{x+1,x}^{(v^-)}&= -\frac{(2\alpha-\eta +(1+2\delta)x)(2\beta-x+1)}{2(2\beta-2x+1)}.
\end{align*}
These two equations lead to the following action of $W=X+\mu Z$ on $\ket{v_x^-}$ 
\begin{equation}\label{eq:WactionOnvx}
    W\ket{v_x^-} =W_{x-1,x}^{(v^-)}\ket{v_{x-1}^-} +W_{x,x}^{(v^-)} \ket{v_x^-}+W_{x+1,x}^{(v^-)} \ket{v_{x+1}^-},
\end{equation}
with 
\begin{align*}
    W_{x-1,x}^{(v^-)}&= \frac{x(2\alpha-\eta+(1+2\delta)(2\beta -x+1))(\beta - \mu -x+1)}{(2\beta-2x+1)(2\beta-2x+2)},\\
    W_{x,x}^{(v^-)}&= \delta\mu -\alpha - W_{x-1,x}-W_{x+1,x},\\
    W_{x+1,x}^{(v^-)}&= -\frac{(2\alpha-\eta +(1+2\delta)x)(2\beta-x+1)(\beta+\mu-x)}{(2\beta-2x)(2\beta-2x+1)}. 
\end{align*}
Let us now describe the action of $V$ on the basis $\ket{w_n^-}$
\begin{equation}\label{eq:VactionOnw_n}
V\ket{w_n^-}=V_{n-1,n}^{(w^-)} \ket{w_{n-1}^-} +V_{n,n}^{(w^-)} \ket{w_n^-}+ V_{n+1,n}^{(w^-)}\ket{w_{n+1}^-},
\end{equation}
with
\begin{align*}
    V_{n-1,n}^{(w^-)}&=-\frac{n(2\alpha-\eta+(n-1)(1+2\delta))}{a_{n-1}} ,\\
    V_{n,n}^{(w^-)}&= -(\beta^2+\beta+2\mu n-2n^2)-(1+2\delta)(2\alpha-\eta)(\beta+\mu-2n) ,\\
    V_{n+1,n}^{(w^-)}&= a_n((1+2\delta)(\beta-\mu+n+1)+2\alpha-\eta)(\beta+\mu -n).
\end{align*}
We conclude this section with the actions of $Z$ and $ZV$ on the basis $\ket{d_n^-}$ explicitly given by
\begin{align}
    Z\ket{d_n^-}&=-\ket{d_n^-}+a_n\ket{d_{n+1}^-},\label{eq:ZactionOn dn}\\
    ZV\ket{d_n^-}&=ZV_{n-1,n}^{(d^-)}\ket{d_{n-1}^-}+ZV_{n,n}^{(d^-)}\ket{d_n^-}+ZV_{n+1,n}^{(d^-)}\ket{d_{n+1}^-}\label{eq:ZVactionOn dn},
\end{align}
with
\begin{align*}
    ZV_{n-1,n}^{(d^-)}&= \frac{n(2\alpha-\eta-n+1)}{a_{n-1}},\\
    ZV_{n,n}^{(d^-)}&= 2n^2+\eta n- 4\alpha n+2\alpha^2 - 2\alpha\beta + \beta^2 - \alpha\eta + \beta\eta  - 2\alpha + \beta+ \eta,\\
    ZV_{n+1,n}^{(d^-)}&= -a_n(\alpha-n+1)(\alpha-n).
\end{align*} 
 
%%%%%%%%%%%%%%%%%%%%%%
%%%%%%%%%%%%%%%%%%%%%%
%%%%%%%%%%%%%%%%%%%%%%
%%%%%%%%%%%%%%%%%%%%%%

\section{Continuous Hahn polynomials}\label{sec:continuousHahn}

The continuous Hahn polynomials $p_n(x)=p_n(x;a,b,c,d)$ are defined in terms of an hypergeometric function as follows
\begin{equation}\label{eq:DefinitionContinuousHahn}
    p_n(x;a,b,c,d)=i^n\frac{(a+c)_n(a+d)_n}{n!}{}_3F_2\left(\begin{array}{c} -n,n+a+b+c+d-1,a+ix\\ a+c,a+d\end{array} ;1\right).
\end{equation}
We refer to \cite{koekoek2010hypergeometric} for a description of their properties. 

In this section, we will show how one can recover continuous Hahn polynomials using representations of the positive type. More precisely, they relate to the following transition coefficient
\begin{equation}
    S_n(x)=\bra{{v_n^+}^\dagger}\ket{w_x^+}.
\end{equation}
That being done we will recover the recurrence and difference equations for the continuous Hahn polynomials using our knowledge about the representations of $m\mathfrak{H}$.

\subsection{Identification}
The next proposition makes precise the identification between the transition coefficients $S_n(x)$ and the continuous Hahn polynomials. 
\begin{proposition}\label{prop:IndentificationContinuousHahn}
    The function $S_n(x)$ can be identified with continuous Hahn polynomials as follows
    \begin{equation}\label{eq:IdentificationContinuousHahn}
        S_n(x)=\frac{(-1-2\delta)^ni^n n!}{a_1\cdots a_n(n+a+b+c+d-1)_n(a+c)_n}p_n((1+2\delta )ix;a,b,c,d),
    \end{equation}
    with $a=-\delta\mu+(1+2\delta)\alpha$, $b=\beta+1-(1+\delta)\mu-(1+2\delta)(\alpha+\eta)$, $c=\beta+1+(1+\delta)\mu-(1+2\delta)\alpha$, and $d=(1+2\delta)(\alpha+\eta)+\delta \mu$.
\end{proposition}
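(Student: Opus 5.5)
The identification is a direct computation: pair the explicit dual vector \eqref{eq:Expression en*} with the explicit eigenvector \eqref{eq:Expression wx}, with all normalization constants set to $1$. Since $\bra{k^\dagger}|$ is dual to $\ket{j}$, only the coefficients with $k\le n$ of $\ket{w_x^+}$ contribute. This gives a terminating sum
\begin{equation*}
S_n(x)=\frac{(2\alpha+\eta+\delta(n-1))_n}{a_1\cdots a_n (n+2\beta+1)_n}\sum_{k=0}^n \binom{n}{k}\frac{(n+2\beta+1)_k}{(2\alpha+\eta+\delta(k-1))_k}\,\frac{(x-\delta\mu-\alpha-(1+\delta)(k-1))_k}{(\mu+\beta+1)_k}.
\end{equation*}
The products $a_1\cdots a_k$ cancel between the two factors, so the $a$'s survive only in the global prefactor $1/(a_1\cdots a_n)$. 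This matches \eqref{eq:IdentificationContinuousHahn}.

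The next step is to rewrite each $k$-dependent factor as a Pochhammer symbol with a $k$-independent base, so the sum becomes a ${}_3F_2$ at argument $1$. I would treat the two cases $\delta=0$ and $\delta=-1$ separately, since the shifts $\delta(k-1)$ and $(1+\delta)(k-1)$ vanish in complementary cases.
\begin{itemize}
\item For $\delta=0$, we have $(2\alpha+\eta)_k$ directly. Using $(y-(k-1))_k=(-1)^k(-y)_k$, the second factor becomes $(x-\alpha-k+1)_k=(-1)^k(\alpha-x)_k$.
\item For $\delta=-1$, the factor $(2\alpha+\eta-(k-1))_k$ becomes $(-1)^k(1-2\alpha-\eta)_k$, while $(x+\mu-\alpha)_k$ is already of the right form.
\end{itemize}
Together with $\binom{n}{k}=(-1)^k(-n)_k/k!$, this gives in both cases
\begin{equation*}
S_n(x)=\frac{C_n}{a_1\cdots a_n}\,{}_3F_2\!\left(\begin{array}{c}-n,\ n+2\beta+1,\ A\pm x\\ B,\ \mu+\beta+1\end{array};1\right),
\end{equation*}
up to a sign $(\pm1)^k$ that should cancel. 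Here $A$, $B$ and the prefactor $C_n$ are explicit.

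Then match with \eqref{eq:DefinitionContinuousHahn}, evaluated at argument $(1+2\delta)ix$, so that $a+i(1+2\delta)ix=a-(1+2\delta)x$.
\begin{itemize}
\item Check the denominator parameters: $a+c=\beta+1+\mu$ for both values of $\delta$, and $a+d=(1+2\delta)(2\alpha+\eta)$ in both cases. The latter is consistent with $B=2\alpha+\eta$ for $\delta=0$ and $B=1-2\alpha-\eta$ for $\delta=-1$ (the $+1$ needs care: possibly one should use $(2\alpha+\eta-k+1)_k=(-1)^k(-2\alpha-\eta)_k$, and then $a+d=-2\alpha-\eta$ indeed).
\item Check $a+b+c+d-1=2\beta+1$.
\item Check that $a-(1+2\delta)x$ reproduces the third numerator parameter. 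For $\delta=0$ this is $\alpha-x$; for $\delta=-1$ it is $\mu-\alpha+x$. Both agree with the rewritten factors.
\end{itemize}

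Finally, compare prefactors. The definition contributes $i^n(a+c)_n(a+d)_n/n!$, which is inverted in \eqref{eq:IdentificationContinuousHahn}. Combine this with $(n+a+b+c+d-1)_n=(n+2\beta+1)_n$ and the prefactor $(2\alpha+\eta+\delta(n-1))_n$ coming from $\bra{{v_n^+}^\dagger}|$. That prefactor should cancel against $(a+d)_n$ up to the sign $(-1-2\delta)^n$.

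The main obstacle is purely bookkeeping: tracking the signs $(-1)^k$ and $(-1)^n$ and the off-by-one shifts in the Pochhammer reversals in the case $\delta=-1$. I would verify the final constant by comparing the $n=1$ case directly.
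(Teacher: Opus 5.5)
Your proposal is correct and takes the same route as the paper's proof. You pair the explicit forms of $\langle {v_n^+}^\dagger|$ and $|w_x^+\rangle$, use $\langle k^\dagger|\ell\rangle=\delta_{k\ell}$, treat $\delta=0$ and $\delta=-1$ separately, and match the resulting terminating ${}_3F_2$ with the definition of $p_n$. Your remaining hedges resolve as follows:
\begin{itemize}
\item For $\delta=-1$ the correct reversal is your second one, $(2\alpha+\eta-k+1)_k=(-1)^k(-2\alpha-\eta)_k$, so in both cases the $(-1)^k$ cancels against the one in $\binom{n}{k}=(-1)^k(-n)_k/k!$.
\item The prefactor satisfies $(2\alpha+\eta+\delta(n-1))_n=(1+2\delta)^n(a+d)_n$. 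This is matched exactly by $(-1-2\delta)^n\,i^n\cdot i^n=(1+2\delta)^n$, because the $i^n$ in the statement multiplies the $i^n$ in the definition of $p_n$ rather than cancelling it. Taken literally, your ``up to the sign $(-1-2\delta)^n$'' is off by $(-1)^n$; your planned $n=1$ check would catch this.
\end{itemize}
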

\begin{proof}
This is a consequence of the explicit expressions \eqref{eq:Expression en*} and \eqref{eq:Expression wx} for $\bra{{v_n^+}^\dagger}|$ and $\ket{w_x^+}$. Indeed
\begin{align}
    \bra{{v_n^+}^\dagger}\ket{w_x^+}&=\frac{(2\alpha+\eta+\delta(n-1))_n}{a_1\cdots a_n(n+2\beta+1)_n}\\&\times\sum_{k=0}^n\sum_{\ell\geq 0} 
    \ \binom{n}{k}\frac{a_1\cdots a_k(n+2\beta+1)_k(x-\delta\mu-\alpha-(1+\delta)(\ell-1))_\ell}{a_1\cdots a_\ell(2\alpha+\eta+\delta(k-1))_k(\mu+\beta+1)_\ell}\langle k^\dagger\ket{\ell}.
\end{align}
Using $\langle k^\dagger\ket{\ell}=\delta_{k,\ell}$, one gets the result by looking separately at the two values $0$ and $-1$ for the representation parameter $\delta$ and the definition of $a,\ b,\ c$ and $d$ given in the Proposition.
\end{proof}

\subsection{Recurrence relation}
We recover the three terms recurrence relation of the continuous Hahn polynomials via 
\begin{multline}
x\bra {v_n^+}^\dagger\ket{w^+_x}=\bra{{v_n^+}^\dagger}|W\ket{w_x^+}
=\left(\bra{{v_n^+}^\dagger}|W^\top\right)\ket{w_x^+}\\= W_{n,n-1}^{(v+)}\bra{{v_{n-1}^+}^\dagger}\ket{w_x^+} +W_{n,n}^{(v+)} \bra{{v_n^+}^\dagger}\ket{w_x^+} +W_{n,n+1} ^{(v+)}\bra{{v_{n+1}^+}^\dagger}\ket{w_x^+},
\end{multline}
which is equivalent to 
\begin{equation}
xS_n(x)=W_{n,n-1}^{(v+)} S_{n-1}(x)+W_{n,n}^{(v+)}S_n(x)+W_{n,n+1}^{(v+)}S_{n+1}(x). 
\end{equation}

Using the normalized continuous Hahn polynomials $\tilde{p}_n(x)=\frac{n!}{i^n(a+c)_n (a+d)_n}p_n(x)$, we obtain the three terms recurrence relation 
\begin{multline}
    (a+ix)\tilde{p}_n(x)=-\frac{a_n(2n+a+b+c+d-2)(2n+a+b+c+d-3)}{(a+d+n-1)(n+a+b+c+d-2)}W_{n,n-1}^{(v+)}\tilde{p}_{n-1}(x)\\
    +(W_{n,n}^{(v+)}-a)\tilde{p}_n(x)-\frac{(a+d+n)(n+a+b+c+d-1)}{a_{n+1}(2n+a+b+c+d-1)(2n+a+b+c+d)}  W_{n,n+1}^{(v+)} \tilde{p}_{n+1}(x).
\end{multline}
A direct computation shows that it is equivalent to the classical  three terms relation (see \cite{koekoek2010hypergeometric})
\begin{equation}
    (a+ix)\tilde{p}_n(x)=A_n\tilde{p}_{n+1}(x)-\left[ A_n+C_n\right]\tilde{p}_n(x)+ C_n\tilde{p}_{n-1}(x),
\end{equation}
with
\begin{align*}
    A_n&=-\frac{(n+a+b+c+d-1)(n+a+c)(n+a+d)}{(2n+a+b+c+d-1)(2n+a+b+c+d)},\\
    C_n&= \frac{n(n+b+c-1)(n+b+d-1)}{(2n+a+b+c+d-2)(2n+a+b+c+d-1)}.
\end{align*}

\subsection{Difference equation}
We recover the difference equation of the continuous Hahn polynomials through the formula
\begin{multline}
\rho_n\bra{{v_n^+}^\dagger}\ket{w^+_x}=\left(\bra{{v_n^+}^\dagger}|V^T\right)\ket{w^+_x}=\bra{{v_n^+}^\dagger}|V\ket{w^+_x} 
\\=V_{x,x-1}^{(w^+)} \bra{{v_n^+}^\dagger}\ket{w_{x-1}^+} +V_{x,x}^{(w^+)} \bra{{v_n^+}^\dagger}\ket{w_x^+}+ V_{x,x+1}^{(w^+)}\bra{{v_n^+}^\dagger}\ket{w_{x+1}^+},
\end{multline}
which is equivalent to 
\begin{equation}
n(n+2\beta+1) S_n(x)=-V_{x-1,x}^{(w^+)}S_n(x-1)+[V_{x-1,x}^{(w^+)}+V_{x+1,x}^{(w^+)}]S_n(x)-V_{x+1,x}^{(w^+)}S_n(x+1).
\end{equation}
Using the identification \eqref{eq:IdentificationContinuousHahn} this allows to recover the difference equation for the continuous Hahn polynomials as found in \cite{koekoek2010hypergeometric}
\begin{equation}
  n(n+a+b+c+d-1)p_n(x)=A(x)p_n(x+i)-\left[ A(x)+B(x)\right]p_n(x) + B(x)p_n(x-i),
\end{equation}
with 
\begin{equation}
    A(x)=(c-ix)(d-ix), \qquad    B(x)=(a+ix)(b+ix) .
\end{equation}

%%%%%%%%%%%%%%%%%%%%%%
%%%%%%%%%%%%%%%%%%%%%%
%%%%%%%%%%%%%%%%%%%%%%
%%%%%%%%%%%%%%%%%%%%%%

\section{Continuous dual Hahn polynomials}\label{sec:ContinuousDualHahn}
The continuous dual Hahn polynomials $q_n(x^2)=q_n(x^2,a,b,c)$ are defined as follows:
\begin{equation}
q_n(x^2;a,b,c)= (a+b)_n(a+c)_n ~{}_3F_2\left(\begin{array}{c} -n,a+ix,a-ix\\ a+b,a+c\end{array} ;1\right). 
\end{equation}
We also refer to \cite{koekoek2010hypergeometric} for a description of their properties.

In this section, we will show how one can recover continuous dual Hahn polynomials using representations of the negative type. More precisely, they will be related to the following transition coefficient
\begin{equation}
    R_n(x)=\bra{{w_n^-}^\dagger}\ket{v_x^-}.
\end{equation}
This is the analogue of the previous section applied to the negative type representations. 

%%%%%%%%%%%%%%%%%%%%%%
%%%%%%%%%%%%%%%%%%%%%%
%%%%%%%%%%%%%%%%%%%%%%
%%%%%%%%%%%%%%%%%%%%%%

\subsection{Identification}
Based on the explicit expression \eqref{eq:ExpressionFn*} of $\bra{{w_n^-}^\dagger}|$ and \eqref{eq:ExpressionVx} of $\ket{v^-_x}$, we find the identification between $R_n(x)$ and the continuous dual Hahn polynomials exactly as in Proposition \ref{prop:IndentificationContinuousHahn}. This is given in the following Proposition. 

\begin{proposition}\label{prop:IdentificationDualHahn}
For $n\in \dN$ we have:
    \begin{equation}
    R_n(x)=\frac{(1+2\delta)^na_0\cdots a_{n-1}}{n!(a+b)_n}q_n(y^2;a,b,c)),
    \end{equation}
    with $iy=x+a$, $a=-\frac{2\beta+1}{2}$, $a+b=(1+2\delta)(2\alpha-\eta)$ and $a+c=-\mu-\beta$. 
\end{proposition}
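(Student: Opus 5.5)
The plan is to compute $R_n(x)=\bra{{w_n^-}^\dagger}\ket{v_x^-}$ directly from the explicit formulas \eqref{eq:ExpressionFn*} and \eqref{eq:ExpressionVx}, with the normalizations $w_0^-(n)=v_0^-(x)=1$. Pairing $\bra{k^\dagger}|$ with $\ket{\ell}$ gives $\delta_{k\ell}$, so $R_n(x)$ collapses to a single finite sum over $0\le k\le n$. The key point is that the products of the $a_j$ cancel termwise: the factor $1/(a_0\cdots a_{k-1})$ coming from the dual vector meets the factor $a_0\cdots a_{k-1}$ in $\ket{v_x^-}$. Only the global prefactor $a_0\cdots a_{n-1}$ survives, and this matches the stated prefactor. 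We obtain
\begin{equation*}
R_n(x)=\frac{(-\mu-\beta)_n(1+2\delta)^n a_0\cdots a_{n-1}}{n!}\sum_{k=0}^n \frac{(-(1+2\delta))^k(-n)_k\,(x-k+1)_k\,(x-2\beta-1)_k}{(-\mu-\beta)_k\,(2\alpha-\eta+\delta(k-1))_k\,k!}.
\end{equation*}

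Next we put each Pochhammer symbol into standard hypergeometric form.

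\begin{itemize}
\item Use $(x-k+1)_k=(-1)^k(-x)_k$.
\item Treat the denominator $(2\alpha-\eta+\delta(k-1))_k$ by cases, exactly as in the proof of Proposition \ref{prop:IndentificationContinuousHahn}. For $\delta=0$ it is $(2\alpha-\eta)_k$. For $\delta=-1$ it is $(2\alpha-\eta-k+1)_k=(-1)^k(\eta-2\alpha)_k$.
\end{itemize}

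In both cases we can write it as $(-(1+2\delta))^{k}\big((1+2\delta)(2\alpha-\eta)\big)_k$ up to the sign conventions. The factors $(-(1+2\delta))^k$ and $(-1)^k$ should then cancel, so that the sum becomes
\begin{equation*}
{}_3F_2\!\left(\begin{array}{c}-n,\,-x,\,x-2\beta-1\\ (1+2\delta)(2\alpha-\eta),\,-\mu-\beta\end{array};1\right).
\end{equation*}

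We then match this with the definition of $q_n$. Set $a=-\frac{2\beta+1}{2}$ and $iy=x+a$. Then $a-iy=-x$ and $a+iy=x-2\beta-1$, so the upper parameters are $-n,\,a+iy,\,a-iy$. The lower parameters are $a+b=(1+2\delta)(2\alpha-\eta)$ and $a+c=-\mu-\beta$. The definition of $q_n$ carries the prefactor $(a+b)_n(a+c)_n$. The factor $(a+c)_n=(-\mu-\beta)_n$ is already present in $R_n(x)$, and dividing by $(a+b)_n$ gives the stated constant $\frac{(1+2\delta)^na_0\cdots a_{n-1}}{n!(a+b)_n}$.

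The main obstacle is bookkeeping rather than ideas. It consists of tracking the signs $(-1)^k$ and $(-(1+2\delta))^k$ separately for $\delta=0$ and $\delta=-1$, and checking that the denominator is indeed $((1+2\delta)(2\alpha-\eta))_k$ in both cases. If a residual sign $(-1)^k$ remains in one case, I would absorb it by the symmetry $y\mapsto -y$, which preserves $q_n(y^2)$, or by re-examining the sign conventions in \eqref{eq:ExpressionFn*}.
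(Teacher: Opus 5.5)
Your route is the paper's route: pair \eqref{eq:ExpressionFn*} with \eqref{eq:ExpressionVx}, use $\langle k^\dagger\ket{\ell}=\delta_{k\ell}$, and treat $\delta=0$ and $\delta=-1$ separately, as in Proposition \ref{prop:IndentificationContinuousHahn}. The cancellation of the $a_j$, the surviving prefactor, and the matching of $a-iy=-x$, $a+iy=x-2\beta-1$ are all correct, and so is the final ${}_3F_2$.

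One written step is wrong, though.

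\begin{itemize}
\item The unified form of the denominator is $(2\alpha-\eta+\delta(k-1))_k=(1+2\delta)^k\big((1+2\delta)(2\alpha-\eta)\big)_k$. It is not $(-(1+2\delta))^k\big((1+2\delta)(2\alpha-\eta)\big)_k$.
\item Your own two case computations give exactly this. The extra factor is $1$ for $\delta=0$ and $(-1)^k$ for $\delta=-1$.
\item With your version, a factor $(-1)^k$ would survive in both cases. With the correct version, the numerator signs combine as $(-(1+2\delta))^k(-1)^k=(1+2\delta)^k$. This cancels the denominator factor exactly, so the sum is the terminating ${}_3F_2$ at argument $1$ that you state, with no residual sign in either case.
\end{itemize}

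Drop your proposed fallback. The map $y\mapsto -y$ only swaps the two numerator parameters $a\pm iy$ and leaves every term of the series unchanged. It therefore cannot absorb a factor $(-1)^k$. Such a factor would change the argument from $1$ to $-1$ and give a genuinely different function.
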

\subsection{Recurrence relation}
From the tri-diagonal action \eqref{eq:VactionOnw_n} of $V$ on $\ket{w_n^-}$, we get the following tri-diagonal action of $V^\top$ on $\bra{{w_n^-}^\dagger}|$ 
\begin{equation}
\bra{{w_n^-}^\dagger}|V^\top=V_{n,n+1}^{(w^-)}\bra{{w_{n+1}^-}^\dagger}|+V_{n,n}^{(w^-)} \bra{{w_n^-}^\dagger}|+V_{n,n-1}^{(w^-)}\bra{{w_{n-1}^-}^\dagger}|.
\end{equation}
Similarly as for positive type representations we get
\begin{equation}
\mu(x) R_n(x)= V_{n,n+1}^{(w^-)}R_{n+1}(x)+V_{n,n}^{(w^-)} R_n(x)+V_{n,n-1}^{(w^-)}R_{n-1}(x). 
\end{equation}
Using Proposition \ref{prop:IdentificationDualHahn}, we recover the normalized recurrence relation for the continuous dual Hahn polynomials
\begin{equation}
-(a^2+y^2)\tilde{q}_n(y^2)=\tilde{q}_{n+1}(y^2)-\left[A_n+C_n\right]\tilde{q}_n(y^2)+A_{n-1}C_n \tilde{q}_{n-1}(y^2),
\end{equation}
with
\begin{equation}
A_n=(n+a+b)(n+a+c),\qquad C_n=n(n+b+c-1).
\end{equation}

\subsection{Difference equation}
We recover the difference equation for the continuous dual Hahn using the following formula, found using the action \eqref{eq:WactionOnvx} of $W$ on $\ket{v_x^-}$
\begin{equation}
\lambda_n R_n(x) =W_{x-1,x}^{(v^-)} R_n(x-1) -\left[ W_{x-1,x}^{(v^-)}+W_{x+1,x}^{(v^-)} \right] +W_{x+1,x}^{(v^-)}R_n(x+1).
\end{equation}
Together with Proposition \ref{prop:IdentificationDualHahn} this allows to recover the difference equation for the continuous dual Hahn polynomials as found in the literature \cite{koekoek2010hypergeometric}
\begin{equation}
nq_n(y^2)=A(y)q_n((y+i)^2)-\left[ A(y)+B(y)\right]q_n(y^2) +B(y)q_n((y-i)^2),
\end{equation}
with 
\begin{equation}
A(y)=\frac{(a-iy)(b-iy)(c-iy)}{2iy(2iy+1)},\qquad B(y)=\frac{(a+iy)(b+iy)(c+iy)}{2iy(2iy-1)}.
\end{equation}

\section{Biorthogonal rational functions associated to the positive type}\label{sec:BRFpositive}
Let us consider a representation of the positive type of $m\mathfrak{H}$ and considrer the transition coefficient
\begin{equation}
    U_n(x)=\bra{{v_n^+}^\dagger}\ket{d_x^+}.
\end{equation}
Through this section we will show that $U_n(x)$ can be expressed in terms of a ${}_3F_2$ hypergeometric function and study its properties, i.e., a biorthogonality relation, a recurrence and a difference equation.

\subsection{Identification}
Introduce the rational functions
\begin{equation}
    \mathcal{U}_n^{-1}(x;a,b,c)={}_3F_2\argu{-n,2b+n+1,x-a}{c,x+b+1}{1}.
\end{equation}
and
\begin{equation}
    \mathcal{U}_n^{0}(x;a,b,c)={}_3F_2\argu{-n,2b+n+1,a}{c,x+b+1}{1}.
\end{equation}

To ensure compactness of the results, we combine both functions into one using the following notation
\begin{equation}
    \mathcal{U}^\varepsilon_n(x;a,b,c)={}_3F_2\argu{-n,2b+n+1,-\varepsilon x+(1+2\varepsilon)a}{c,x+b+1}{1},
\end{equation}
with $\varepsilon\in \{0,-1\}$. We will sometimes forgot the parameters and only write $\mathcal{U}_n^\varepsilon(x)$.

The next proposition describes the transition coefficient $U_n(x)$ in terms of the rational function $\mathcal{U}_n^\varepsilon(x)$.

\begin{proposition}\label{prop:IdentificationPositive}
    The transition coefficients $U_n(x)$ are explicitly related to the rational function $\mathcal{U}_n^\varepsilon(x)$ by the following expression
        \begin{equation}
        U_n(x)=\frac{((1+2\varepsilon)c+\varepsilon(n-1))_n}{a_1\cdots a_n(2b+n+1)_n}\mathcal{U}_n^\varepsilon(x;a,b,c),
    \end{equation}
    where $a=\alpha$, $b=\beta$, $c=(1+2\delta)(2\alpha+\eta)$, and $\varepsilon=\delta$.
\end{proposition}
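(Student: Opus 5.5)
The plan is to compute $U_n(x)=\bra{{v_n^+}^\dagger}\ket{d_x^+}$ directly from the explicit formulas, exactly as in the proof of Proposition~\ref{prop:IndentificationContinuousHahn}. I would insert the expansion \eqref{eq:Expression en*} of $\bra{{v_n^+}^\dagger}|$ (with $v_0^+(n)=1$) and the expansion \eqref{eq:Expression dx+} of $\ket{d_x^+}$ (with $d_0^+(x)=1$), and then use $\langle k^\dagger\ket{\ell}=\delta_{k\ell}$. This collapses the pairing to a single finite sum over $0\le k\le n$. In that sum the factor $a_1\cdots a_k$ coming from the dual vector cancels the $1/(a_1\cdots a_k)$ coming from $\ket{d_x^+}$. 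What remains is
\begin{equation*}
U_n(x)=\frac{(2\alpha+\eta+\delta(n-1))_n}{a_1\cdots a_n(n+2\beta+1)_n}\sum_{k=0}^n\binom{n}{k}(-1-2\delta)^k\frac{(n+2\beta+1)_k\,(-\delta x+(1+2\delta)\alpha)_k}{(2\alpha+\eta+\delta(k-1))_k\,(x+\beta+1)_k}.
\end{equation*}

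Next I would write $\binom{n}{k}=(-1)^k(-n)_k/k!$, so that the sum takes the shape of a terminating ${}_3F_2$ at unit argument. The upper parameters are $-n$, $2\beta+n+1$ and $-\delta x+(1+2\delta)\alpha$, and the lower parameter $x+\beta+1$ already matches that of $\mathcal{U}^\varepsilon_n$ with $b=\beta$ and $\varepsilon=\delta$. It then remains to treat the factor $(2\alpha+\eta+\delta(k-1))_k$ and the signs, which I would do separately for the two values of $\delta$.

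For $\delta=0$, the denominator is $(2\alpha+\eta)_k=(c)_k$. The signs $(-1)^k$ from the binomial and $(-1)^k$ from $(-1-2\delta)^k$ cancel, and the third upper parameter is $\alpha=a$. This gives $\mathcal{U}^0_n$, with prefactor $(c)_n/(a_1\cdots a_n(2\beta+n+1)_n)$, as claimed.

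For $\delta=-1$, I would use the identity $(y-k+1)_k=(-1)^k(-y)_k$ with $y=2\alpha+\eta$. This turns the denominator into $(-1)^k(c)_k$, where $c=-(2\alpha+\eta)$. Here $(-1-2\delta)^k=1$, so the two factors $(-1)^k$ (from the binomial and from the denominator) cancel, and the third upper parameter becomes $x-\alpha=x-a$. The prefactor is $(2\alpha+\eta-n+1)_n=(-c-(n-1))_n$, which is exactly $((1+2\varepsilon)c+\varepsilon(n-1))_n$ for $\varepsilon=-1$.

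The only real obstacle is the sign and Pochhammer bookkeeping in the $\delta=-1$ case, where a reversed product has to be recognised as a rising factorial. I would double-check it on $n=1$ before finalising.
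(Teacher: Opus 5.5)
Your proposal is correct and follows the paper's own route. The paper also pairs the explicit expansions \eqref{eq:Expression en*} and \eqref{eq:Expression dx+} using $\langle k^\dagger\ket{\ell}=\delta_{k,\ell}$, and then matches the resulting terminating sum with $\mathcal{U}_n^\varepsilon$ separately for $\delta=0$ and $\delta=-1$; your $\delta=-1$ bookkeeping (via $(2\alpha+\eta-k+1)_k=(-1)^k(c)_k$ with $c=-(2\alpha+\eta)$, and the prefactor $(-c-(n-1))_n$) checks out.
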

\begin{proof}
    The proof is done similarly as in Proposition \ref{prop:IndentificationContinuousHahn} using the explicit expressions \eqref{eq:Expression en*} for $\bra {v_n^+}^\dagger|$ and \eqref{eq:Expression dx+} for $\ket{d^+_x}$. 
\end{proof}

\subsection{Recurrence relation}
The rational function $\mathcal{U}_n^\varepsilon$ turns out to satisfy a recurrence relation as described in the following proposition.
\begin{proposition}
The rational functions $\mathcal{U}_n^\varepsilon(x;a,b,c)$
satisfy the following three term recurrence relation
\begin{multline}\label{eq:Recurrence for U}
    x\left(A_n\mathcal{U}_{n-1}^\varepsilon(x;a,b,c)+B_n\mathcal{U}_n^\varepsilon(x;a,b,c)+C_n\mathcal{U}_{n+1}^\varepsilon(x;a,b,c)\right)\\
    =\left(A'_n\mathcal{U}_{n-1}^\varepsilon(x;a,b,c)+B'_n\mathcal{U}_n^\varepsilon(x;a,b,c)+C'_n\mathcal{U}_{n+1}^\varepsilon(x;a,b,c)\right),
\end{multline}
where 
\begin{align*}
    A_n&= -\frac{n(1+2\varepsilon)(c-n-1)}{2(b+n)(2b+2n+1)},\\
    B_n&= \varepsilon-\frac{(n+1)(1+2\varepsilon)(c+n)}{2(b+n+1)}+\frac{n(1+2\varepsilon)(c+n-1)}{2(b+n)},\\
    C_n&= \frac{2b+n+1}{2(1+2\varepsilon)(c+n-1)(b+n+1)(2b+2n+1)},\\
    A'_n&= \frac{n(1+2\varepsilon)(c-n-1)}{2(2b+2n+1)},\\
    B'_n&=-(1+2\varepsilon)\frac{c}{2} +a,\\
    C'_n&= \frac{2b+n+1}{2(1+2\varepsilon)(c+n-1)(2b+2n+1)}.\\
\end{align*}
\end{proposition}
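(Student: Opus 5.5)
The recurrence comes from the generalized eigenvalue problem $X\ket{d_x^+}=x Z\ket{d_x^+}$, paired against the dual vectors $\bra{{v_n^+}^\dagger}|$ and expanded through the tridiagonal actions of $X$ and $Z$ on the basis $\ket{v_n^+}$. Concretely, we pair the identity $X\ket{d_x^+}=xZ\ket{d_x^+}$ with $\bra{{v_n^+}^\dagger}|$. This gives
\begin{equation*}
\left(\bra{{v_n^+}^\dagger}|X^\top\right)\ket{d_x^+}=x\left(\bra{{v_n^+}^\dagger}|Z^\top\right)\ket{d_x^+}.
\end{equation*}
Since $\{\ket{v_n^+}\}$ is a basis with dual basis $\bra{{v_n^+}^\dagger}|$, the tridiagonal formulas for $X\ket{v_n^+}$ and $Z\ket{v_n^+}$ transpose into
\begin{equation*}
\bra{{v_n^+}^\dagger}|X^\top=X^{(v^+)}_{n,n-1}\bra{{v_{n-1}^+}^\dagger}|+X^{(v^+)}_{n,n}\bra{{v_n^+}^\dagger}|+X^{(v^+)}_{n,n+1}\bra{{v_{n+1}^+}^\dagger}|,
\end{equation*}
and similarly for $Z$. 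Here $X^{(v^+)}_{n,n-1}$ is the coefficient of $\ket{v_n^+}$ in $X\ket{v_{n-1}^+}$, i.e. the entry $X^{(v^+)}_{(n-1)+1,\,n-1}$ from Section \ref{sec:Eigenvectors}. We therefore obtain
\begin{equation*}
x\left(Z_{n,n-1}U_{n-1}+Z_{n,n}U_n+Z_{n,n+1}U_{n+1}\right)=X_{n,n-1}U_{n-1}+X_{n,n}U_n+X_{n,n+1}U_{n+1},
\end{equation*}
exactly in the manner of the recurrence for $S_n(x)$ in Section \ref{sec:continuousHahn}. One point needs checking: all pairings must be finite sums. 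This holds because $\bra{{v_n^+}^\dagger}|$ is finitely supported by \eqref{eq:Expression en*}.

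Next we substitute Proposition \ref{prop:IdentificationPositive}, writing $U_n(x)=N_n\,\mathcal{U}_n^\varepsilon(x)$ with
\begin{equation*}
N_n=\frac{((1+2\varepsilon)c+\varepsilon(n-1))_n}{a_1\cdots a_n(2b+n+1)_n},
\end{equation*}
and we set $a=\alpha$, $b=\beta$, $c=(1+2\delta)(2\alpha+\eta)$, $\varepsilon=\delta$. The resulting coefficients of $\mathcal{U}_{n-1},\mathcal{U}_n,\mathcal{U}_{n+1}$ are $Z_{n,n\pm1}N_{n\pm1}$ and $X_{n,n\pm1}N_{n\pm1}$. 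We then divide the whole relation by a common factor, which we choose to be $N_n$ (possibly up to a sign or a factor of $2$), so that all dependence on the $a_k$ cancels. That it cancels follows from the factors $a_n$ and $1/a_{n+1}$ in the matrix entries against the ratios $N_{n-1}/N_n$ and $N_{n+1}/N_n$. The middle coefficients should then match directly: $Z_{n,n}^{(v^+)}$ reproduces $B_n$ once $2\alpha+\eta=(1+2\varepsilon)c$ is inserted, using $(1+2\varepsilon)^2=1$.

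For $B'_n$ there is a discrepancy to resolve. The listed $X_{n,n}^{(v^+)}=-\eta/2$ is what appears, and
\begin{equation*}
-\tfrac{\eta}{2}=\alpha-\tfrac{2\alpha+\eta}{2}=a-(1+2\varepsilon)\tfrac{c}{2},
\end{equation*}
which is exactly $B'_n$. The off-diagonal coefficients require simplifying ratios of Pochhammer symbols for the two values $\varepsilon\in\{0,-1\}$ separately. For instance, $((1+2\varepsilon)c+\varepsilon(n-1))_n$ equals $(c)_n$ when $\varepsilon=0$ and $(-c-n+1)_n=(-1)^n(c)_n$ when $\varepsilon=-1$. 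I expect the ratio $N_{n-1}/N_n$ to produce the factor $(c+n-1)$-type terms, and the index-shift convention (which of $X_{n\pm1,n}$ actually enters) to produce the $(c-n-1)$ factor appearing in $A_n$ and $A'_n$.

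The main obstacle is this bookkeeping: getting the transpose indices right, and reconciling the factors $(2\alpha+\eta-(1+2\delta)(2\beta+\dots))$ in $X^{(v^+)}_{n+1,n}$ and $Z^{(v^+)}_{n+1,n}$ with the stated, rather simple $C_n$ and $C'_n$. Here it matters that only the ratio $C'_n/C_n=b+n+1$ is intrinsic; it agrees with $Z^{(v^+)}_{n+1,n}=-X^{(v^+)}_{n+1,n}/(\beta+n+1)$ up to a common sign absorbed by the normalization. Since the relation is homogeneous, each column pair $(A_n,A'_n)$, $(C_n,C'_n)$ can be rescaled independently if needed, so the real content is the three ratios $A'_n/A_n=-(b+n)$, $C'_n/C_n$, and the relative scaling between columns. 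If the bookkeeping becomes unreliable, the fallback is to verify \eqref{eq:Recurrence for U} directly on the ${}_3F_2$ series by comparing coefficients of each term $k$, for both $\varepsilon=0$ and $\varepsilon=-1$.
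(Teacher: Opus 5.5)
Your strategy is the paper's own argument: pair the generalized eigenvalue problem for $\ket{d_x^+}$ with $\langle {v_n^+}^\dagger|$, transpose the tridiagonal actions of $X$ and $Z$ on $\ket{v_n^+}$, and insert Proposition \ref{prop:IdentificationPositive}. Your set-up, the finiteness remark, and your checks of $B_n$ and $B'_n$ are correct. The gap is the coefficient matching you defer, and the devices you propose to close it are not valid.

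The relation is homogeneous only under one overall factor. Each $\mathcal{U}^\varepsilon_j$ is a fixed ${}_3F_2$, so once $B_n,B'_n$ are pinned down, the pairs $(A_n,A'_n)$ and $(C_n,C'_n)$ are determined. They cannot be rescaled separately, and a sign cannot be ``absorbed'' into one column. You have also swapped the columns. By your own transposition rule, the $\mathcal{U}^\varepsilon_{n+1}$ column comes from the simple entries $X_{n,n+1}=a_{n+1}(n+1+\beta)$ and $Z_{n,n+1}=a_{n+1}$, the coefficients of $\ket{v_n^+}$ in $X\ket{v_{n+1}^+}$ and $Z\ket{v_{n+1}^+}$. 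The complicated entries $X^{(v^+)}_{n,n-1}$ and $Z^{(v^+)}_{n,n-1}$ feed the $\mathcal{U}^\varepsilon_{n-1}$ column.

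Now carry the computation out. Use $N_n=(1+2\varepsilon)^n(c)_n/\bigl(a_1\cdots a_n(2b+n+1)_n\bigr)$ and $X_{m+1,m}=-(\beta+m+1)Z_{m+1,m}$, which follows from $[V,Z]=2X+\eta$; the printed $X^{(v^+)}_{n+1,n}$ should contain $2\beta+n+2$, not $2\beta+2$. You then obtain
\[
C_n=\frac{(1+2\varepsilon)(c+n)(2b+n+1)}{2(b+n+1)(2b+2n+1)},\quad C'_n=(b+n+1)C_n,\quad A_n=-\frac{n(1+2\varepsilon)(c-2b-n-1)}{2(b+n)(2b+2n+1)},\quad A'_n=-(b+n)A_n .
\]
Moreover, the vector \eqref{eq:Expression dx+} actually satisfies $X\ket{d_x^+}=-x\,Z\ket{d_x^+}$, as you can see by comparing $\ket{0}$-components. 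So $x$ enters with the opposite sign.

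These results disagree with the stated $A_n,A'_n,C_n,C'_n$ and with the sign of $x$, and the displayed relation does not hold as printed. Take $\varepsilon=0$, $a=1$, $b=0$, $c=2$, so that $\mathcal{U}^0_0=1$ and $\mathcal{U}^0_1=x/(x+1)$. At $n=0$ the statement reads $x(-1+\tfrac12\mathcal{U}^0_1)=\tfrac12\mathcal{U}^0_1$, which is false. The corrected relation $\mathcal{U}^0_1=-x(\mathcal{U}^0_1-1)$ does hold.

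So neither your main argument nor your series fallback can establish the proposition as written. What your method, and the paper's sketch, actually proves is
\[
-x\left(A_n\mathcal{U}^\varepsilon_{n-1}+B_n\mathcal{U}^\varepsilon_n+C_n\mathcal{U}^\varepsilon_{n+1}\right)=A'_n\mathcal{U}^\varepsilon_{n-1}+B'_n\mathcal{U}^\varepsilon_n+C'_n\mathcal{U}^\varepsilon_{n+1},
\]
with $B_n,B'_n$ as stated and the corrected $A_n,A'_n,C_n,C'_n$ above.
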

\begin{proof}
    The recurrence relation is obtained from the corresponding relation on the transition coefficients $U_n(x)=\bra {v_n^+}^\dagger\ket{d_x^+}$ which is a consequence of the GEVP \eqref{eq:Def GEVP} 
    \begin{equation}
        \lambda_x \bra {v_n^+}^\dagger|~X\ket{d_x^+} =\bra {v_n^+}^\dagger|~Z\ket{d_x^+}.
    \end{equation}
    The proof is then a consequence of the explicit actions of $X$ and $Z$ on the vectors $\bra {v_n^+}^\dagger|$ combined with the identification proved in Proposition \ref{prop:IdentificationPositive}.
\end{proof}
\subsection{Difference equation} 
The functions $\mathcal{U}_n^\varepsilon$ also satisfy a difference equation which complete the description of their bispectral property.
\begin{proposition}
The rational functions $\mathcal{U}_n^\varepsilon(x;a,b,c)$
satisfy the following difference equation
\begin{multline}\label{eq:DifferenceEquationUn}
    (n+b)(n+b+1)\left( \mathcal{U}_n^\varepsilon(x;a,b,c)+\frac{a-x-(1+\varepsilon)(b+1)}{b+x+1}\mathcal{U}_n^\varepsilon(x+1;a,b,c)\right)\\
    =U_{x-1}\mathcal{U}_n^\varepsilon(x-1;a,b,c)+U_x\mathcal{U}_n^\varepsilon(x;a,b,c)+U_{x+1}\mathcal{U}_n^\varepsilon(x+1;a,b,c),
\end{multline}
where
\begin{align*}
    U_{x-1}&=\frac{x(x+1)(a-x-(1+\varepsilon)(b+1)}{b+x+1},\\ 
    U_x&=(1+2\varepsilon)b(c-b-1)+2x^2+x((1+2\varepsilon)c-2a),\\
    U_{x+1}&= -((1+2\varepsilon)c-a+x-(1+\varepsilon)(b+1))(b+x) .
\end{align*}
\end{proposition}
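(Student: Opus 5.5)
The plan is to obtain the difference equation \eqref{eq:DifferenceEquationUn} from the eigenvalue equation of $V$ on the left vectors, evaluated against the tridiagonal action of $VZ$ on the generalized eigenvectors $\ket{d_x^+}$. This mirrors the way the difference equation for continuous Hahn polynomials was derived. The starting point is the fact that $\bra{{v_n^+}^\dagger}|$ is a left eigenvector of $V$: since $V\ket{v_m^+}=\rho_m\ket{v_m^+}$ and $\bra{{v_n^+}^\dagger}|$ is the dual basis, we get $\bra{{v_n^+}^\dagger}|V=\rho_n\bra{{v_n^+}^\dagger}|$ with $\rho_n=-(n+\beta)(n+\beta+1)$. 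Pairing this with $Z\ket{d_x^+}$ gives
\begin{equation*}
\rho_n\bra{{v_n^+}^\dagger}|Z\ket{d_x^+}=\bra{{v_n^+}^\dagger}|VZ\ket{d_x^+}.
\end{equation*}
The pairings are finite sums, since the dual vectors are finitely supported, so no convergence issue arises.

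Next, expand both sides using \eqref{eq:ZactionOn dx PositiveCase} and \eqref{eq:VZactionOn dx PositiveCase}. The left side becomes $\rho_n\big(-U_n(x)+Z^{(d^+)}_{x+1,x}U_n(x+1)\big)$. The right side becomes
\begin{equation*}
VZ^{(d^+)}_{x-1,x}U_n(x-1)+VZ^{(d^+)}_{x,x}U_n(x)+VZ^{(d^+)}_{x+1,x}U_n(x+1).
\end{equation*}
Multiplying by $-1$ and inserting $Z^{(d^+)}_{x+1,x}=-(\alpha-x-(1+\delta)(\beta+1))/(\beta+x+1)$, the left side becomes
\begin{equation*}
(n+\beta)(n+\beta+1)\left(U_n(x)+\frac{\alpha-x-(1+\delta)(\beta+1)}{\beta+x+1}U_n(x+1)\right).
\end{equation*}
This matches the left side of \eqref{eq:DifferenceEquationUn} with $a=\alpha$, $b=\beta$, $\varepsilon=\delta$.

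Then apply Proposition \ref{prop:IdentificationPositive}. The prefactor relating $U_n(x)$ to $\mathcal{U}_n^\varepsilon(x)$ depends only on $n$ and not on $x$, so it cancels from every term. The equation therefore holds verbatim for $\mathcal{U}_n^\varepsilon$, with coefficients equal to $-VZ^{(d^+)}$ rewritten in the parameters $a,b,c$.

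The remaining and main step is to check that these coefficients coincide with $U_{x-1},U_x,U_{x+1}$. The point to watch is that $\eta$ must be eliminated through $(1+2\delta)c=2\alpha+\eta$, using that $(1+2\delta)^2=1$, so $\eta=(1+2\varepsilon)c-2a$.
\begin{itemize}
\item For the diagonal coefficient, $-VZ^{(d^+)}_{x,x}=-(2\alpha\beta+\beta\eta+x\eta+2x^2-(1+2\delta)\beta(\beta+1))$. After substitution this should reduce to $(1+2\varepsilon)b(c-b-1)+2x^2+x((1+2\varepsilon)c-2a)$.
\item For the upper coefficient, $\alpha+\eta=(1+2\varepsilon)c-a$ turns $VZ^{(d^+)}_{x+1,x}$ into the stated $U_{x+1}$.
\item The lower coefficient, labelled $U_{x-1}$, should come out directly as $\pm VZ^{(d^+)}_{x-1,x}$.
\end{itemize}

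The expected obstacle is purely one of signs: the diagonal term needs an overall sign adjustment. Also, $\rho_n$ is negative while the statement has $+(n+b)(n+b+1)$, so the equation must be multiplied by $-1$ consistently throughout. If the printed coefficients disagree in sign, I would recheck the stated actions of $Z$ and $VZ$ on $\ket{d_x^+}$ by applying $V$ to $Z\ket{d_x^+}$ directly in the basis $\ket{k}$, using \eqref{eq:Expression dx+}. I would also verify the resulting identity at $n=0$, where $\mathcal{U}_0^\varepsilon=1$, as a consistency check on all three coefficients.
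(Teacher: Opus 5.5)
Your argument is the paper's proof: you pair $\langle {v_n^+}^\dagger|V=\rho_n\langle {v_n^+}^\dagger|$ with $Z|d_x^+\rangle$, expand using the actions of $Z$ and $VZ$ on $|d_x^+\rangle$, and cancel the $x$-independent prefactor of Proposition \ref{prop:IdentificationPositive}.

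Your sign bookkeeping is wrong, though. No multiplication by $-1$ is needed, because $Z^{(d^+)}_{x,x}=-1$ and the minus sign in $Z^{(d^+)}_{x+1,x}$ absorb the sign of $\rho_n$: $\rho_n Z^{(d^+)}_{x,x}=(n+\beta)(n+\beta+1)$ and $\rho_n Z^{(d^+)}_{x+1,x}=(n+\beta)(n+\beta+1)\frac{\alpha-x-(1+\delta)(\beta+1)}{\beta+x+1}$. These already give the left-hand side of \eqref{eq:DifferenceEquationUn}. The right-hand coefficients are therefore $+VZ^{(d^+)}_{x-1,x}$, $+VZ^{(d^+)}_{x,x}$ and $+VZ^{(d^+)}_{x+1,x}$, not their negatives.

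After substituting $\eta=(1+2\varepsilon)c-2a$, these match $U_{x-1},U_x,U_{x+1}$ term by term. In particular $VZ^{(d^+)}_{x,x}$ itself, not $-VZ^{(d^+)}_{x,x}$, reduces to $(1+2\varepsilon)b(c-b-1)+2x^2+x((1+2\varepsilon)c-2a)$. So the diagonal sign adjustment you anticipate never arises. Your own $n=0$ check, with $\mathcal{U}_0^\varepsilon=1$, holds with these $+$ signs for both $\varepsilon=0$ and $\varepsilon=-1$ and would fail with yours.
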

\begin{proof}
    Recalling that $\langle {v_n^+}^\dagger|$ is an eigenvector of $V^\top$ with eigenvalue $\rho_n=-(n+\beta)(n+\beta+1)$, we have the following 
    \begin{equation}
        \rho_n\bra {v_n^+}^\dagger|Z\ket{\ d^+_x}=\left(\bra {v_n^+}^\dagger|V^\top \right)|Z\ket{\ d^+_x}=\bra{v_n^+}^\dagger|VZ \ket{\ d^+_x}. 
    \end{equation}
    Using the action of $VZ$ and $Z$ on $\ket{d_x^+}$ described in \eqref{eq:VZactionOn dx PositiveCase} and \eqref{eq:ZactionOn dx PositiveCase} this is equivalent to 
    \begin{multline}
        \rho_n\left( Z_{x,x}^{(d^+)})\bra {v_n^+}^\dagger\ket{\ d^+_x}+Z_{x+1,x}^{(d^+)})\bra {v_n^+}^\dagger\ket{\ d^+_{x+1}}\right)=\\
        VZ_{x-1,x}^{(d^+)}\bra {v_n^+}^\dagger\ket{\ d^+_{x-1}}+VZ_{x,x}^{(d^+)}\bra {v_n^+}^\dagger\ket{\ d^+_{x}}+VZ_{x+1,x}^{(d^+)}\bra {v_n^+}^\dagger\ket{\ d^+_{x+1}}.
    \end{multline}
    Looking at the explicit expressions for the coefficients and using the identification of parameters $a=\alpha$, $b=\beta$, $c=(1+\varepsilon)(2\alpha+\eta)$ and $\varepsilon=\delta$ lead to the result. 
\end{proof}

\subsection{Biorthogonality relation}
We conclude the exploration of the properties of the functions $\mathcal{U}_n^\varepsilon$ with the description of their biorthogonal partners and the biorthogonality relation. We first recall the classical family of Jacobi polynomials, denoted $P_n^{(\alpha,\beta)}$, which can be define as the family of orthogonal polynomials satisfying the following orthogonality relation for $\alpha>-1,\beta>-1$
\begin{equation}\label{eq:OrthogonalityJacobi}
    \int_0^1 P_n^{(\alpha,\beta)}(1-2u)   P_k^{(\alpha,\beta)}(1-2u) u^\alpha(1-u)^\beta~du=\frac{\Gamma(n+\alpha+1)\Gamma(n+\beta+1)}{n!(2n+\alpha+\beta+1)\Gamma(n+\alpha+\beta+1)}\delta_{n,k}.
\end{equation}
They satisfy the following remarkable identity \cite{GradshteinRyshik15}, valid for $Re(\lambda)>0$ and $Re(\mu)>0$, which gives an integral representation for ${}_3F_2$ hypergeometric functions 
\begin{equation}\label{eq:IntegralRep3F2}
    \int_0^1u^{\lambda-1}(1-u)^{\mu-1}P_n^{(\alpha,\beta)}(1-\gamma u)~du=\frac{(\alpha+1)_nB(\lambda,\mu)}{n!}{}_3F_2 \argu{-n,n+\alpha+\beta+1,\lambda}{\alpha+1,\lambda+\mu}{\frac{1}{2}\gamma},
\end{equation}
where $B(x,y)$ denotes the usual Beta function.  Applied with $\gamma=2, \alpha=c-1,\beta=2b-c+1,\lambda=-\varepsilon x +(1+2\varepsilon)a$ and $\mu=(1+\varepsilon)x+b-(1+2\varepsilon)a+1$ this provides the following integral representation of our rational function $\mathcal{U}_n^\varepsilon(x)$
\begin{multline}\label{eq:IntegralUn}
    \mathcal{U}^\varepsilon_n(x;a,b,c)=\frac{n!}{(c)_nB(-\varepsilon x +(1+2\varepsilon)a,(1+\varepsilon)x+b-(1+2\varepsilon)a+1)}\\\times \int_0^1u^{-\varepsilon x +(1+2\varepsilon)a-1}(1-u)^{(1+\varepsilon)x+b-(1+2\varepsilon)a}P_n^{(c-1,2b-c+1)}(1-2 u)~du.
\end{multline}

Let us define the following rational function,which will correspond to the biorthogonal partner of the function $\mathcal{U}_n^\varepsilon(x)$
\begin{equation}
    \mathcal{V}_n^\varepsilon(x;a,b,c)={}_3F_2\argu{-n,n+2b+1,\varepsilon x+c-(1+2\varepsilon)a+1}{c,-x+b+3}{1}.
\end{equation}
The connection with the functions $\mathcal{U}_n$ given by
\begin{equation}
    \mathcal{V}_n^\varepsilon(x;a,b,c)=\mathcal{U}_n^\varepsilon(-x+1;(1+2\varepsilon)c-a+1,b,c),
\end{equation}
yields the following integral representation using formula \eqref{eq:IntegralUn}
\begin{multline}\label{eq:IntegralVn}
    \mathcal{V}_n^\varepsilon(x;a,b,c)=\frac{n!}{(c)_nB(\varepsilon x+c-(1+2\varepsilon)a+\varepsilon+1,-(1+\varepsilon)x+b-c+(1+2\varepsilon)a-\varepsilon+1)}\\
    \times \int_0^1 u^{\varepsilon x+c-(1+2\varepsilon)a+\varepsilon}(1-u)^{-(1+\varepsilon)x+b-c+(1+2\varepsilon)a-\varepsilon}P_n^{(c-1,2b-c+1)}(1-2u)~du.
\end{multline}

\begin{proposition}
    Assume the real part of 
    \begin{equation}
        (1+2\varepsilon)a,\ b-(1+2\varepsilon)a)+1,\ c,\  2b-c+1,\ \text{and } b-c+(1+2\varepsilon)a+1,
    \end{equation}
    are all positive. 
    The rational functions $\mathcal{U}^\varepsilon_n(x;a,b,c)$ and $\mathcal{V}^\varepsilon_n(x;a,b,c)$ satisfy the following biorthogonality relation 
    \begin{equation}
        \int_{-\infty}^{+\infty} \mathcal{U}_n^\varepsilon(ix;a,b,c)\mathcal{V}_k^\varepsilon(ix;a,b,c)w(x)~dx=\frac{2\pi n!\Gamma(c)\Gamma(n+2b-c+2)}{(c)_n(2n+2b+1)\Gamma(n+2b+1)}\delta_{n,k},
    \end{equation}
    where the weight $w(x)$ is given by
    \begin{multline}\label{eq:WeightUnVn}
            w(x)=B(-\varepsilon ix +(1+2\varepsilon)a,(1+\varepsilon)ix+b-(1+2\varepsilon)a+1)\\\times B(\varepsilon ix+c-(1+2\varepsilon)a+\varepsilon+1,-(1+\varepsilon)ix+b-c+(1+2\varepsilon)a-\varepsilon+1).
    \end{multline}
\end{proposition}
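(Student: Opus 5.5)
Substitute the integral representations \eqref{eq:IntegralUn} and \eqref{eq:IntegralVn} into the left-hand side. The product of the two Beta normalizations cancels the weight $w(x)$ of \eqref{eq:WeightUnVn}. After Fubini, the left-hand side becomes
\begin{multline*}
\frac{n!\,k!}{(c)_n(c)_k}\int_0^1\!\!\int_0^1 P_n^{(c-1,2b-c+1)}(1-2u)P_k^{(c-1,2b-c+1)}(1-2v)\,u^{(1+2\varepsilon)a-1}(1-u)^{b-(1+2\varepsilon)a}\\
\times v^{c-(1+2\varepsilon)a+\varepsilon}(1-v)^{b-c+(1+2\varepsilon)a-\varepsilon}\,I(u,v)\,du\,dv,
\end{multline*}
with
\[
I(u,v)=\int_{-\infty}^{\infty}\Bigl(\tfrac{v(1-u)}{u(1-v)}\Bigr)^{\varepsilon ix}\Bigl(\tfrac{1-u}{1-v}\Bigr)^{ix}dx .
\]
For $\varepsilon=0$ and $\varepsilon=-1$ the base of the combined exponential is $\frac{1-u}{1-v}$ and $\frac{u}{v}$ respectively. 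In either case $I$ is $\int e^{ixt}dx=2\pi\delta(t)$ with $t=\log\frac{1-u}{1-v}$ (resp. $\log\frac uv$). This collapses the double integral onto the diagonal $u=v$ with Jacobian factor $1-u$ (resp. $u$).

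On the diagonal, the powers combine. For $\varepsilon=0$ we get $u^{c-1}(1-u)^{2b-c}$, and the Jacobian raises this to $u^{c-1}(1-u)^{2b-c+1}$. For $\varepsilon=-1$ we get $u^{c-2}(1-u)^{2b-c+1}$, times $u$, giving the same weight. This is exactly the Jacobi weight with parameters $(c-1,2b-c+1)$. The orthogonality relation \eqref{eq:OrthogonalityJacobi} then gives $\delta_{nk}$ times
\[
2\pi\,\frac{(n!)^2}{(c)_n^2}\cdot\frac{\Gamma(n+c)\Gamma(n+2b-c+2)}{n!(2n+2b+1)\Gamma(n+2b+1)} .
\]
Since $\Gamma(n+c)=(c)_n\Gamma(c)$, this simplifies to the stated constant.

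The main obstacle is making the delta-function step rigorous rather than formal. I would substitute $s=\log\frac{1-u}{u}$-type coordinates (e.g. $t=\log\frac{1-u}{1-v}$, or write $u=\frac{1}{1+e^{s}}$). Then the $u$-integral becomes a Fourier transform $\hat f(x)$ of an $L^1\cap L^2$ function $f(s)$, and similarly the $v$-integral gives $\hat g(-x)$. The $x$-integral is then Parseval/Plancherel, $\int\hat f(x)\hat g(-x)\,dx=2\pi\int f g$.

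The positivity hypotheses on $(1+2\varepsilon)a$, $b-(1+2\varepsilon)a+1$, $c$, $2b-c+1$ and $b-c+(1+2\varepsilon)a+1$ are exactly what guarantee the following: the Mellin-type integrals converge at both endpoints, the integral representations \eqref{eq:IntegralUn}–\eqref{eq:IntegralVn} are valid along the imaginary line (real parts of the Beta arguments positive), and $f,g$ decay exponentially so that Plancherel applies. I would check integrability near $u=0,1$ separately in each case $\varepsilon\in\{0,-1\}$.
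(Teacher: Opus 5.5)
Your argument is the same as the paper's: substitute \eqref{eq:IntegralUn} and \eqref{eq:IntegralVn}, cancel $w$ against the Beta normalizations, collapse the $x$-integral to $2\pi u^{-\varepsilon}(1-u)^{1+\varepsilon}\delta(u-v)$, and conclude with \eqref{eq:OrthogonalityJacobi}. Your exponents, the Jacobi weight $u^{c-1}(1-u)^{2b-c+1}$ and the final constant are all correct. The paper stops at the formal $\delta$-identity, so your Plancherel justification goes beyond it. However, two of the claims you make for that justification are not right.

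First, the listed hypotheses do not include $\mathrm{Re}\bigl(c-(1+2\varepsilon)a+\varepsilon+1\bigr)>0$. This is one of the Beta arguments in \eqref{eq:IntegralVn}: it equals $c-a+1$ for $\varepsilon=0$ and $c+a$ for $\varepsilon=-1$. For example, $\varepsilon=0$, $a=b=10$, $c=1$ satisfies all five listed conditions, yet $c-a+1<0$ and the $v$-integral diverges at $v=0$. So the hypotheses do not make \eqref{eq:IntegralVn} valid, and this condition must be added; it is a defect of the statement that you inherited. For $\varepsilon=-1$ it is genuinely needed, not just a convenience: the pole of $\Gamma(c+a-ix)$ at $x=-i(c+a)$ crosses the real axis when $\mathrm{Re}(c+a)$ changes sign.

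Second, the logit coordinate does not turn the oscillating factor into a pure exponential. The variable that does is $t=-\log(1-u)$ for $\varepsilon=0$ and $t=-\log u$ for $\varepsilon=-1$, which gives $e^{\mp ixt}$. With it, $f(t)\sim t^{\sigma-1}$ and $g(t)\sim t^{\sigma'-1}$ as $t\to0^+$. Here $\sigma,\sigma'$ are the $x$-independent Beta arguments of \eqref{eq:IntegralUn} and \eqref{eq:IntegralVn}: namely $a$ and $c-a+1$ for $\varepsilon=0$, and $b+a+1$ and $b-c-a+2$ for $\varepsilon=-1$. Positivity gives $f,g\in L^1$, but $f,g\in L^2$ only when $\mathrm{Re}\,\sigma,\mathrm{Re}\,\sigma'>\tfrac12$. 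So Plancherel covers only a subregion of the parameters.

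The repair is analytic continuation in the parameters:
\begin{itemize}
\item the $L^2$ subregion is open and nonempty;
\item the full parameter domain, with the added condition, is a convex tube, hence connected;
\item both sides are analytic on that domain, because the integrand is $O(|x|^{-\mathrm{Re}(c)-1})$ for $\varepsilon=0$ and $O(|x|^{-\mathrm{Re}(2b-c+2)-1})$ for $\varepsilon=-1$, locally uniformly in the parameters, with no poles on the real line.
\end{itemize}
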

    
\begin{proof}
    Using the integral representations \eqref{eq:IntegralUn} and \eqref{eq:IntegralVn} together with \eqref{eq:WeightUnVn} we prove
    \begin{multline}
        \int_{-\infty}^{+\infty} \mathcal{U}_n^\varepsilon(ix;a,b,c)\mathcal{V}_k^\varepsilon(ix;a,b,c)w(x)~dx= \frac{n!k!}{(c)_n(c)_k}\\
       \times\int_0^1\int_0^1\left( \int_{-\infty}^{+\infty}\left(\frac{u}{v}\right)^{\varepsilon ix}\left( \frac{1-u}{1-v}\right)^{(1+\varepsilon) ix}~dx\right)u^{(1+2\varepsilon)a-1}(1-u)^{b-(1+2\varepsilon)a}P_n^{(c-1,b-c-1)}(1-2u)\\
        \times v^{c-(1+2\varepsilon)a+\varepsilon}(1-v)^{b-c+(1+2\varepsilon)a-\varepsilon}P_k^{(c-1,b-c-1)}(1-2v)~dudv
    \end{multline}
    The following formula holds true
    \begin{equation}
        \int_{-\infty}^{+\infty}\left( \frac{u}{v}\right)^{\varepsilon ix}\left( \frac{1-u}{1-v}\right)^{(1+\varepsilon) ix}~dx=2\pi u^{-\varepsilon}(1-u)^{1+\varepsilon}\delta (u-y).
    \end{equation}
    Indeed, for $\varepsilon =0$ we have
\begin{equation}
\int_{-\infty}^{+\infty}\left( \frac{1-u}{1-v}\right)^{-ix}~dx=\int_{-\infty}^{+\infty}e^{-ix \ln \left(\frac{1-u}{1-v}\right)}~dx=2\pi \delta \left(\ln\left(\frac{1-u}{1-v}\right)\right)=2\pi (1-u) \delta(u-v).
\end{equation}
The case $\varepsilon=-1$ is proved similarly. 
    This leads to 
    \begin{multline}
        \int_{-\infty}^{+\infty} \mathcal{U}_n^\varepsilon(ix;a,b,c)\mathcal{V}_k^\varepsilon(ix;a,b,c)w(x)~dx= \frac{2\pi n!k!}{(c)_n(c)_k}\\
        \times \int_0^1 u^{c-1}(1-u)^{2b-c+1}P_n^{(c-1,2b-c+1)}(1-2u)P_k^{(c-1,2b-c+1)}(1-2u)~du.
        \end{multline}
        We end the proof using the orthogonality relation \eqref{eq:OrthogonalityJacobi} for the Jacobi polynomials . 
\end{proof}
%%%%%%%%%%%%%%%%%%%%%%
%%%%%%%%%%%%%%%%%%%%%%
%%%%%%%%%%%%%%%%%%%%%%
%%%%%%%%%%%%%%%%%%%%%%

\section{Rational functions associated to the negative type}\label{sec:BRFnegative}
We now consider a representation of $m\mathfrak{H}$ of the negative type and the transition coefficient
\begin{equation}
    F_n(x)=\bra {d_n^-}^*\ket{v_x^-}.
\end{equation}

This section will be devoted to prove that it can also be written in terms of a ${}_3F_2$ hypergeometric function and to prove a recurrence, a difference equation and a biorthogonality relation satisfied by this function. Recall that, for negative type representations, solutions to the GEVP \eqref{eq:Def GEVP} are found for $\delta=-1$ only, which will thus be assume through this section.  

\subsection{Identification}
Let us introduce the following polynomial family
\begin{equation}
    \mathcal{F}_n(x;a,b,c)={}_3F_2\argu{-n,-x,x-a}{b-n,c}{1}.
\end{equation}
As for the functions $\mathcal{U}_n(x)$, we will only write $\mathcal{F}_n(x)$ when the confusion is not possible. 

We relate the transition coefficients $F_n(x)$ to the hypergeometric functions $\mathcal{F}_n(x)$ in the following proposition.
\begin{proposition}\label{prop:IdentificationNegative}
The transition coefficients $F_n(x)$ are expressed in terms of the functions $\mathcal{F}_n$ as follows
\begin{equation}
    F_n(x)=\frac{a_0\cdots a_{n-1}(-1)^{n+1}(b-n)_n}{n!}\mathcal{F}_n(x;a,b,c).
\end{equation}
with $a=2\beta+1$, $b=\alpha-\beta$, and $c=-2\alpha+\eta$.
\end{proposition}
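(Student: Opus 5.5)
The plan is a direct computation, in the same way as Proposition \ref{prop:IndentificationContinuousHahn} and Proposition \ref{prop:IdentificationPositive}. We pair the explicit expression \eqref{eq:ExpressionDn-*} of $\bra{{d_n^-}^*}|$ with the explicit expression \eqref{eq:ExpressionVx} of $\ket{v_x^-}$. Throughout we take $\delta=-1$, as is assumed in this section, and use the normalizations $v_0^-(x)=d_0^-(n)=1$.

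Since $\bra{{d_n^-}^*}|$ is finitely supported, $\langle k^\dagger\ket{\ell}=\delta_{k,\ell}$ reduces the pairing to a single finite sum over $0\le k\le n$, so no convergence issue arises. Substituting $\delta=-1$ in \eqref{eq:ExpressionVx}, the coefficient of $\ket{k}$ in $\ket{v_x^-}$ is
\begin{equation*}
\frac{a_0\cdots a_{k-1}}{k!}\,\frac{(x-k+1)_k\,(x-2\beta-1)_k}{(2\alpha-\eta-k+1)_k}.
\end{equation*}
The factor $a_0\cdots a_{k-1}$ cancels exactly against the denominator $a_0\cdots a_{k-1}$ appearing in the coefficient of $\bra{k^\dagger}|$ in \eqref{eq:ExpressionDn-*}. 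What remains is the prefactor
\begin{equation*}
\frac{a_0\cdots a_{n-1}(-1)^{n+1}(\alpha-\beta-n)_n}{n!}
\end{equation*}
multiplied by
\begin{equation*}
\sum_{k=0}^n\frac{(-n)_k\,(x-k+1)_k\,(x-2\beta-1)_k}{k!\,(\alpha-\beta-n)_k\,(2\alpha-\eta-k+1)_k}.
\end{equation*}

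Next we convert the two "descending" Pochhammer symbols using $(y-k+1)_k=(-1)^k(-y)_k$. This gives $(x-k+1)_k=(-1)^k(-x)_k$ and $(2\alpha-\eta-k+1)_k=(-1)^k(\eta-2\alpha)_k$. The two signs $(-1)^k$ cancel between numerator and denominator, and the sum becomes
\begin{equation*}
\hg{3}{2}\argu{-n,\,-x,\,x-(2\beta+1)}{\alpha-\beta-n,\,-2\alpha+\eta}{1}.
\end{equation*}
This is precisely $\mathcal{F}_n(x;a,b,c)$ with $a=2\beta+1$, $b=\alpha-\beta$ and $c=-2\alpha+\eta$. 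With this choice the prefactor reads $a_0\cdots a_{n-1}(-1)^{n+1}(b-n)_n/n!$, which is the claimed expression.

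The only delicate point is the sign and shift bookkeeping when specializing $\delta=-1$, in particular checking that $(2\alpha-\eta+\delta(k-1))_k$ becomes $(2\alpha-\eta-k+1)_k$ and that it pairs correctly with $(x-k+1)_k$ so that the signs cancel. One should also note that the denominators are nonvanishing. For $(c)_k$ this follows from the condition $2\alpha-\eta+(n-1)(1+2\delta)\neq 0$ in Proposition \ref{prop:negativeType}. For $(\alpha-\beta-n)_k$ it is implicit in the normalization \eqref{eq:InnerProductGEVP}, where $(1+\beta-\alpha)_n$ appears.
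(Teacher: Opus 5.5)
Your proposal is correct and follows the paper's own approach: you pair \eqref{eq:ExpressionDn-*} with \eqref{eq:ExpressionVx} at $\delta=-1$, cancel the factors $a_0\cdots a_{k-1}$, and use $(y-k+1)_k=(-1)^k(-y)_k$ to turn the finite sum into the ${}_3F_2$ defining $\mathcal{F}_n$. Your argument in fact carries out the sign and shift bookkeeping that the paper's one-line proof leaves implicit.
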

\begin{proof}
    The proof is done as in Proposition \ref{prop:IdentificationDualHahn} using the explicit expressions for $\bra{{d_n^-}^*}|$ and $\ket{v_x}$ respectively given in \eqref{eq:ExpressionDn-*} and \eqref{eq:ExpressionVx}.
\end{proof}

\begin{remark}
    Notice that the polynomial $\mathcal{F}_n(x)$ is the dual of $\mathcal{U}_n^{-1}(x)$ in the sense that $n$ and $x$ are exchanged between the two functions. This remark is connected to the fact that the negative type representation can be understood as the dual of a representation of the positive type as mentioned in Remark \ref{rem:Duality}.
\end{remark}

\subsection{Recurrence relation}
Using the identification made in Proposition \ref{prop:IdentificationNegative}, we prove a recurrence relation for the functions $\mathcal{F}_n$.
\begin{proposition}
The rational functions $\mathcal{F}_n(x;a,b,c)$ satisfy the following three term recurrence relation 
\begin{multline}\label{eq:RecurrenceRelation Rational Functions F_n}
A_n\mathcal{F}_{n+1}(x;a,b,c)+B_n\mathcal{F}_n(x;a,b,c)_n+C_n\mathcal{F}_{n-1}(x;a,b,c) \\
=(x-\beta)(\beta-x+1)\left[-\mathcal{F}_n(x;a,b,c)+\frac{n}{n+\beta+\alpha}\mathcal{F}_{n-1}(x;a,b,c) \right],
\end{multline}
where 
\begin{align}
    A_n&=(2\alpha-\eta-n)(\beta-\alpha+n+1),\\
    B_n&= 2n^2+\eta n- 4\alpha n+2\alpha^2 - 2\alpha\beta + \beta^2 - \alpha\eta + \beta\eta  - 2\alpha + \beta+ \eta,\\
    C_n&=\frac{n(\alpha-n)(\alpha-n+1)}{\alpha-\beta-n}.
\end{align}
\end{proposition}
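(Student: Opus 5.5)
The recurrence will come from the algebra rather than from hypergeometric manipulations, following the pattern of the positive case. The starting point is that $\ket{v_x^-}$ is an eigenvector of $V$ with eigenvalue $\rho(x)=(x-\beta)(\beta-x+1)$. Inserting $Z$ and pairing with $\bra{{d_n^-}^*}|$ gives
\begin{equation*}
\rho(x)\,\bra{{d_n^-}^*}|Z\ket{v_x^-}=\bra{{d_n^-}^*}|ZV\ket{v_x^-}.
\end{equation*}
Both sides then have to be moved onto the $\bra{{d_m^-}^*}|$ side. To do this I use the biorthogonality \eqref{eq:InnerProductGEVP} with $\kappa_n=1$. Since $Z\ket{d_m^-}$, $m\in\dN$, span the space (topologically, as $Z$ is upper-bidiagonal with invertible diagonal $-1$ when $\delta=-1$), the functionals $\bra{{d_n^-}^*}|Z$ and $\bra{{d_n^-}^*}|ZV$ are determined by their values on the vectors $\ket{d_m^-}$.

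For the right-hand side, by \eqref{eq:ZVactionOn dn} the quantity $\bra{{d_n^-}^*}|ZV\ket{d_m^-}$ is nonzero only for $m\in\{n-1,n,n+1\}$. Writing $ZV=\sum_j c_j Z$-type combinations needs care, so I express the functional differently. Let $\ket{d_m^-}=\sum_k T_{km}Z\ket{d_k^-}$, which inverts \eqref{eq:ZactionOn dn}. Then $\bra{{d_n^-}^*}|ZV\ket{d_m^-}$ gives the matrix of $ZV$ in the $\ket{d^-}$-basis paired against $Z\ket{d^-}$.

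A cleaner route uses the transpose GEVP. The functional $\bra{{d_n^-}^*}|ZV$ should expand as a combination of $\bra{{d_{n\pm1}^-}^*}|Z$ and $\bra{{d_n^-}^*}|Z$, with coefficients read off from the tridiagonal matrix of $ZV$ in \eqref{eq:ZVactionOn dn}. This is because the pairing $\bra{{d_n^-}^*}|Z\ket{d_m^-}=\delta_{nm}$ identifies $\bra{{d_n^-}^*}|$ with the dual basis relative to $Z$. Hence $\bra{{d_n^-}^*}|ZV=\sum_m ZV^{(d^-)}_{n,m}\bra{{d_m^-}^*}|Z$, where $ZV^{(d^-)}_{n,m}$ denotes the coefficient of $\ket{d_n^-}$ in $ZV\ket{d_m^-}$. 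The coefficient $ZV^{(d^-)}_{n,n}$ is exactly $B_n$. The coefficient $ZV^{(d^-)}_{n,n+1}=-a_{n+1}(\alpha-n)(\alpha-n-1)$ and the coefficient $ZV^{(d^-)}_{n,n-1}=n(2\alpha-\eta-n+1)/a_{n-1}$ will become $A_n$ and $C_n$ after renormalization.

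For the left-hand side, I compute $\bra{{d_n^-}^*}|Z$ directly from \eqref{eq:ZactionOn dn}. Since $Z\ket{d_m^-}=-\ket{d_m^-}+a_m\ket{d_{m+1}^-}$, we have $\bra{{d_n^-}^*}|Z\ket{d_m^-}=\delta_{nm}$ by normalization. To get a relation expressible via $F$, I instead apply the relation to $\ket{v_x^-}$ after writing $\bra{{d_n^-}^*}|Z$ in terms of $\bra{{d_m^-}^*}|$ themselves. This uses $\bra{{d_n^-}^*}|\ket{d_m^-}$, computed from the explicit formulas \eqref{eq:ExpressionDn-} and \eqref{eq:ExpressionDn-*}, and I expect it to be lower bidiagonal. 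The bidiagonality yields the combination $-\mathcal{F}_n+\frac{n}{n+\beta+\alpha}\mathcal{F}_{n-1}$ on the right. Equivalently, I would directly verify that $\bra{{d_n^-}^*}|Z=-\bra{{d_n^-}^*}|+\gamma_n\bra{{d_{n-1}^-}^*}|$ coordinatewise using \eqref{eq:ExpressionDn-*} and the bidiagonal form of $Z$.

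Finally, I substitute $F_n(x)=\frac{a_0\cdots a_{n-1}(-1)^{n+1}(b-n)_n}{n!}\mathcal{F}_n(x)$ from Proposition \ref{prop:IdentificationNegative} and divide by the prefactor of $F_n$. The $a_k$ factors cancel in each coefficient, producing $A_n,B_n,C_n$ and the factor $\frac{n}{n+\beta+\alpha}$ (with $\delta=-1$ and $b=\alpha-\beta$).

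I expect the main obstacle to be the bookkeeping of the left-hand side. Correctly identifying $\bra{{d_n^-}^*}|Z$ (and $ZV$) as combinations of the $\bra{{d_m^-}^*}|$, given that these are not a dual basis of $\ket{d_m^-}$, means tracking how $Z$ is inverted on the $\ket{d^-}$ basis. I would settle it by a direct coordinate check against \eqref{eq:ExpressionDn-*}, and then verify the resulting constants on $n=0,1$ using the ${}_3F_2$ directly.
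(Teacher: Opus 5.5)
Your plan is the paper's own route: pair $\rho(x)\bra{{d_n^-}^*}|Z\ket{v_x^-}=\bra{{d_n^-}^*}|ZV\ket{v_x^-}$, move the actions of $Z$ and $ZV$ from the vectors $\ket{d_m^-}$ to the forms $\bra{{d_n^-}^*}|$ by duality, then renormalize. The gap is that the duality rule you state for $ZV$ is wrong.

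The relation $\bra{{d_n^-}^*}|Z\ket{d_m^-}=\delta_{nm}$ makes the forms $\bra{{d_m^-}^*}|$ dual to the vectors $Z\ket{d_m^-}$, not to $\ket{d_m^-}$. So every finitely supported form satisfies $\phi=\sum_m \phi(Z\ket{d_m^-})\,\bra{{d_m^-}^*}|$, and hence $\bra{{d_n^-}^*}|T=\sum_m\bra{{d_n^-}^*}|TZ\ket{d_m^-}\,\bra{{d_m^-}^*}|$ for any operator $T$.
\begin{itemize}
\item For $T=Z$ the coefficients are the entries of $Z$ in the basis $\ket{d_m^-}$. This gives your $\bra{{d_n^-}^*}|Z=-\bra{{d_n^-}^*}|+a_{n-1}\bra{{d_{n-1}^-}^*}|$, which is correct.
\item For $T=ZV$ the coefficients are $\bra{{d_n^-}^*}|Z\,(VZ\ket{d_m^-})=(VZ)_{n,m}$. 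These are the entries of $VZ$, not $ZV$, and there is no trailing $Z$.
\end{itemize}
Your identity $\bra{{d_n^-}^*}|ZV=\sum_m ZV^{(d^-)}_{n,m}\bra{{d_m^-}^*}|Z$ already fails for $n=0$ on $\ket{d_0^-}$. Since $\bra{{d_0^-}^*}\ket{d_m^-}=-\delta_{m,0}$, the left side is $-(ZV)_{0,0}$ and the right side is $(ZV)_{0,0}$, which differ for generic parameters. Even if granted, the trailing $Z$ combined with the bidiagonal relation would give four terms $\mathcal{F}_{n+1},\dots,\mathcal{F}_{n-2}$, not the three-term left-hand side. Also, $\bra{{d_n^-}^*}\ket{d_m^-}=-a_m\cdots a_{n-1}$ for every $m\le n$, so that matrix is full lower triangular, not bidiagonal. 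Only its inverse, the matrix of $Z$, is bidiagonal, so your direct coordinate check is the argument that actually works for the $Z$ part.

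To close the gap, write \eqref{eq:relationsMetaHahn3} as $VZ=ZV+2X+\eta$ and use $X\ket{d_m^-}=\lambda_m(-\ket{d_m^-}+a_m\ket{d_{m+1}^-})$ with $\lambda_m=\alpha-m$. Then $(VZ)_{m-1,m}=(ZV)_{m-1,m}$, $(VZ)_{m,m}=(ZV)_{m,m}-2\lambda_m+\eta$, and $(VZ)_{m+1,m}=(ZV)_{m+1,m}+2\lambda_m a_m=-a_m(\alpha-m)(\alpha-m-1)$. The row you need is $(VZ)_{n,n+1}=(n+1)(2\alpha-\eta-n)/a_n$, $(VZ)_{n,n}=B_n$, $(VZ)_{n,n-1}=-a_{n-1}(\alpha-n)(\alpha-n+1)$. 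Write $F_m=N_m\mathcal{F}_m$ with $N_m$ the prefactor of Proposition~\ref{prop:IdentificationNegative}, so $N_{n+1}/N_n=-a_n(\alpha-\beta-n-1)/(n+1)$; dividing by $N_n$ then gives exactly $A_n$, $B_n$, $C_n$.

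Your assignment takes entries from the wrong positions. It puts the factor $2\alpha-\eta-n+1$ (the coefficient of $\ket{d_{n-1}^-}$ in $ZV\ket{d_n^-}$) beside $\mathcal{F}_{n-1}$, and a factor $(\alpha-n)(\alpha-n-1)$ beside $\mathcal{F}_{n+1}$. That cannot produce $A_n=(2\alpha-\eta-n)(\beta-\alpha+n+1)$ or $C_n$. The match you noticed between $B_n$ and the entry printed as $ZV^{(d^-)}_{n,n}$ in Section~\ref{sec:Eigenvectors} holds only because that printed entry is in fact $(VZ)_{n,n}$. Computing $ZV\ket{d_n^-}$ directly gives the diagonal entry $B_n+2(\alpha-n)-\eta$.

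Finally, the left side renormalizes to $-\mathcal{F}_n+\frac{n}{n+\beta-\alpha}\mathcal{F}_{n-1}$, consistent with the denominator of $C_n$. So the $n+\beta+\alpha$ in the statement is a sign slip, and it cannot come out of $b=\alpha-\beta$ as you claim.
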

\begin{proof}
    Since $\ket{v_x^-}$ is an eigenvector for $V$ with eigenvalue $\rho_x$ the  following relation holds true
    \begin{equation}
        \rho(x)\bra {d_n^-}^*|~Z\ket{v_x^-}=\bra {d_n^-}^*|~ZV\ket{v_x^-}.
    \end{equation}
    Using the actions \eqref{eq:ZactionOn dn} and \eqref{eq:ZVactionOn dn}, one can obtain the actions of $Z^\dagger$ and $(ZV)^\dagger$ on $\bra {d_n^-}^*|$ via the duality relation \eqref{eq:dualityd_n}. These actions together with the identification made in Proposition \ref{prop:IdentificationNegative} lead to the desired result. 
\end{proof}

\subsection{Difference equation}
Similarly as for the family $\mathcal{U}^\varepsilon_n$, the functions $\mathcal{F}_n$ turn out to satisfy a generalized difference equation as in the following Proposition.
\begin{proposition}
    The rational functions $\mathcal{F}_n(x;a,b,c)$ satisfy the following difference equation
    \begin{multline}
        X_{x,x+1}\mathcal{F}_n(x+1)+X_{x,x}\mathcal{F}_n(x)+X_{x,x-1}\mathcal{F}_n(x-1)=\\
        (\alpha-n)\left[Z_{x,x+1}\mathcal{F}_n(x+1)+Z_{x,x}\mathcal{F}_n(x)+Z_{x,x-1}\mathcal{F}_n(x-1)\right],
    \end{multline}
    where $X^{(v^-)}_{x+1,x},X^{(v^-)}_{x,x},X^{(v^-)}_{x-1,x}$ and $Z^{(v-)}_{x+1,x},Z^{(v-)}_{x,x},Z^{(v-)}_{x-1,x}$ where defined in \eqref{eq:Zaction on vx} and \eqref{eq:Xaction on vx}.
\end{proposition}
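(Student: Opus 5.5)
The plan is to use the transposed generalized eigenvalue problem \eqref{eq:TransposeGEVP} satisfied by $\bra{{d_n^-}^*}|$. With $\delta=-1$ we have $\lambda_n=\alpha-n$, and pairing with $\ket{v_x^-}$ gives
\begin{equation*}
\bra{{d_n^-}^*}|X\ket{v_x^-}=\left(\bra{{d_n^-}^*}|X^\top\right)\ket{v_x^-}=\lambda_n\left(\bra{{d_n^-}^*}|Z^\top\right)\ket{v_x^-}=(\alpha-n)\bra{{d_n^-}^*}|Z\ket{v_x^-}.
\end{equation*}
This is the analogue, with the roles of the two sides exchanged, of the argument used for the difference equation of $\mathcal{U}_n^\varepsilon$. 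There, $V$ acted on the generalized eigenvector and the bra was an eigenvector. Here the bra solves the GEVP, and the ket $\ket{v_x^-}$ supports tridiagonal actions of $X$ and $Z$.

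Next I substitute the tridiagonal actions \eqref{eq:Xaction on vx} and \eqref{eq:Zaction on vx} of $X$ and $Z$ on $\ket{v_x^-}$, specialized to $\delta=-1$. Both sides become three-term combinations of $F_n(x-1)$, $F_n(x)$ and $F_n(x+1)$, where $F_n(y)=\bra{{d_n^-}^*}\ket{v_y^-}$, with coefficients $X^{(v^-)}_{y,x}$ and $Z^{(v^-)}_{y,x}$ for $y\in\{x-1,x,x+1\}$. These are the coefficients appearing in the statement, read with the index convention used there.

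Finally I use Proposition \ref{prop:IdentificationNegative}. The prefactor $\frac{a_0\cdots a_{n-1}(-1)^{n+1}(b-n)_n}{n!}$ relating $F_n(x)$ to $\mathcal{F}_n(x;a,b,c)$ depends only on $n$, not on $x$, because the normalization $v_0^-(x)=1$ has been fixed. It therefore factors out of every term and can be divided out, provided it is nonzero, i.e.\ $(\alpha-\beta-n)_n\neq0$, which is already needed for $\bra{{d_n^-}^*}|$ to be defined. This yields the stated difference equation for $\mathcal{F}_n$ directly.

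The one point needing care is that the tridiagonal formulas for $X\ket{v_x^-}$ and $Z\ket{v_x^-}$ are genuine identities in $\dC^\dN$ for generic $x$. The shifted vectors $\ket{v_{x\pm1}^-}$ must belong to the same normalized family, so that no $x$-dependent normalization factor creeps in. Once these actions are granted as stated in Section \ref{sec:Eigenvectors}, the pairing with the finitely supported functional $\bra{{d_n^-}^*}|$ is legitimate, since it is continuous on $\dC^\dN$, and the rest is bookkeeping.
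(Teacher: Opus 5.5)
Your proposal is correct and matches the paper's own argument. You pair the transposed generalized eigenvalue problem $\bra{{d_n^-}^*}|X^\top=(\alpha-n)\bra{{d_n^-}^*}|Z^\top$ with $\ket{v_x^-}$, expand using the tridiagonal actions \eqref{eq:Xaction on vx} and \eqref{eq:Zaction on vx}, and divide out the $x$-independent prefactor of Proposition \ref{prop:IdentificationNegative}. Your extra remarks only make explicit what the paper's terse proof leaves implicit: the finitely supported functional is continuous, the coefficients of $\mathcal{F}_n(x\pm1)$ are $X^{(v^-)}_{x\pm1,x}$ and $Z^{(v^-)}_{x\pm1,x}$, and the eigenvalue is $\lambda_n=\alpha-n$ rather than the $\rho_n$ written in the paper.
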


\begin{proof}
    The result is obtained in a similar fashion as for the recurrence equation \eqref{eq:Recurrence for U} using the fact that $\bra {d_n^-}^*|$ satisfy the generalized eigenvalue problem
    \begin{equation}
    \bra {d_n^-}^*|X^\top=\rho_n \bra {d_n^-}^*|Z^\top. 
    \end{equation}
    Applying this equality to the vectors $\ket{ v_x^-}$ and using the actions of $Z$ and $X$ on the vectors $\ket{v_x^-}$ given in the equations \eqref{eq:Zaction on vx} and \eqref{eq:Xaction on vx}. 
\end{proof}
%%%%%%%%%%%%%%%%%%%%%%
%%%%%%%%%%%%%%%%%%%%%%
%%%%%%%%%%%%%%%%%%%%%%
%%%%%%%%%%%%%%%%%%%%%%
\subsection{Biorthogonality relation}\label{sec:BiorthogonalityNegative}

We now identify the biorthogonal partners of the polynomials $\mathcal{F}_n$ together with the corresponding orthogonality measure. In this section, we will write $\mathcal{F}_n(x)$ for the function $\mathcal{F}_n(x;2\beta+1,\alpha-\beta,-2\alpha+\eta)$ Recall from Proposition \ref{prop:IdentificationDualHahn} that the parameters of the continuous dual Hahn polynomials attached to the negative type representations are
\begin{equation}\label{eq:CDH-parameters}
a=-\beta-\tfrac{1}{2},\qquad b=\eta-2\alpha+\beta+\tfrac{1}{2},\qquad c=\tfrac{1}{2}-\mu,
\end{equation}
with $x=\beta+\tfrac12+iy$. Throughout this subsection we assume
\begin{equation}\label{eq:positivity-conditions}
\alpha>\tfrac12,\qquad \beta<-\tfrac12,\qquad \eta>2\alpha-\beta-\tfrac12,
\end{equation}
which make $a$, $b$ and $\alpha-\tfrac12$ positive. Define the weight function
\begin{equation}\label{eq:weight-F}
W(y)
=\left|\frac{\Gamma(x-2\beta-1)\,\Gamma(x+\eta-2\alpha)\,\Gamma(x+\alpha-\beta-1)}{\Gamma(2x-2\beta-1)}\right|^2 ,
\end{equation}
which is the continuous dual Hahn weight with parameters $(-\beta-\tfrac12,\eta-2\alpha+\beta+\tfrac{1}{2},\alpha-\frac12)$. Finally, introduce the rational functions
\begin{equation}\label{eq:def-G}
\mathcal{G}_m(x)=\frac{(\eta-2\alpha+2\beta+1-x)_m}{(x+\alpha-\beta-1-m)_m\,(\alpha+\beta-m-x)_m}\;
\hg{3}{2}\argu{-m,\;x-2\beta-1,\;x+\alpha-\beta-1-m}{\alpha-\beta-1-m,\;x+2\alpha-\eta-2\beta-m}{1}.
\end{equation}
Notice that  $\mathcal{G}_m$ has its poles at the purely imaginary points $y=\pm i(\alpha-\frac12-j)$, $j=1,\dots,m$.

It is convenient to record the following factorization. For $x=\beta+\tfrac12+iy$, set
\begin{equation}\label{eq:def-Theta}
\Theta_m(y)=(\eta-2\alpha+\beta+\tfrac{1}{2}-iy)_m\;\hg{3}{2}\argu{-m,\;x-2\beta-1,\;x+\alpha-\beta-1-m}{\alpha-\beta-1-m,\;x+2\alpha-\eta-2\beta-m}{1},
\end{equation}
which is an entire function of $y^2$ up to the factor $(\eta-2\alpha+\beta+\tfrac{1}{2}-iy)_m$. Let
\begin{equation}\label{eq:def-cm-Wm}
c_m=\alpha-\tfrac12-m,\qquad
W_m(y)=\left|\frac{\Gamma(-\beta-\tfrac12+iy)\Gamma(\eta-2\alpha+\beta+\tfrac{1}{2}+iy)\Gamma(c_m+iy)}{\Gamma(2iy)}\right|^{2},
\end{equation}
so that $W_m$ is again a continuous dual Hahn weight, with third parameter $c_m$ in place of $c_0=\alpha-\frac12$. Since
$\left|\Gamma(c_m+iy)\right|^{2}=\left|\Gamma(c_0+iy)\right|^{2}/\left[(x+\alpha-\beta-1-m)_m(\alpha+\beta-m-x)_m\right]$
on the line $x=\beta+\frac12+iy$, comparison with \eqref{eq:def-G} gives
\begin{equation}\label{eq:factorization-GW}
\mathcal{G}_m(x)\,W(y)=\Theta_m(y)\,W_m(y).
\end{equation}
The parameter $c_m$ decreases with $m$, and thus the biorthogonality involves mass points as soon as it becomes negative, precisely as for the continuous dual Hahn polynomials themselves. The relation reads as follows.
\begin{theorem}\label{thm:biorthogonality-F}
    Let $\alpha,\beta,\eta$ satisfy \eqref{eq:positivity-conditions} and let $n,m\in\dN$. Then
    \begin{equation}\label{eq:biorthogonality-F}
        \frac{1}{2\pi}\int_{0}^{\infty} \mathcal{F}_n(x)\,\Theta_m(y)\,W_m(y)\,dy
        \;+\;\sum_{j=0}^{J_m-1}\lambda^{(m)}_j\,\mathcal{F}_n(x_{m,j})\,\Theta_m(y_{m,j})=h_n\,\delta_{n,m},
    \end{equation}
    with $x=\beta+\frac12+iy$ in the integral,
    \begin{equation}\label{eq:hn-F}
        h_n=\frac{n!\,\Gamma(\eta-2\alpha)\Gamma(\eta-\alpha+\beta)\Gamma(\alpha-\beta-1-n)}{(\alpha-\beta-n)_n},
    \end{equation}
    and where the discrete part is carried by the $J_m=\max(0,\lceil -c_m\rceil)$ points
    \begin{equation}\label{eq:mass-points}
    y_{m,j}=i(c_m+j),\qquad x_{m,j}=\beta+1-\alpha+m-j, \qquad 0\le j<J_m,
    \end{equation}
    that is, by those $j\ge0$ with $c_m+j<0$, with masses
    \begin{equation}\label{eq:masses}
    \lambda^{(m)}_j=\frac{\Gamma(a-c_m)\Gamma(b-c_m)\Gamma(a+c_m)\Gamma(b+c_m)}{\Gamma(-2c_m)}\;
    \frac{(-1)^j\,(2c_m)_j\,(c_m+1)_j\,(a+c_m)_j\,(b+c_m)_j}{j!\,(c_m)_j\,(c_m-a+1)_j\,(c_m-b+1)_j},
    \end{equation}
    where $a= -\beta-\tfrac12$ and $b=\eta-2\alpha+\beta+\tfrac12$. 
\end{theorem}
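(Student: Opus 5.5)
The plan is to reduce the biorthogonality to the orthogonality of continuous dual Hahn polynomials (with the third parameter $c_m$ in place of $c_0$), including their discrete masses. The key observation is that for $x=\beta+\tfrac12+iy$, the function $\mathcal{F}_n(x)={}_3F_2(-n,-x,x-2\beta-1;\alpha-\beta-n,-2\alpha+\eta;1)$ is a polynomial of degree $n$ in $y^2$. Indeed, $(-x)_k(x-2\beta-1)_k=\prod_{l<k}\big((\beta+\tfrac12+l)^2+y^2\big)$ up to sign, with $a=-\beta-\tfrac12$. We would expand it in the continuous dual Hahn basis $q_k(y^2;a,b,c_m)$ and show that $\Theta_m(y)$ is, up to the factor $(b-iy)_m$, orthogonal to all polynomials in $y^2$ of degree $<m$. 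We then use a symmetric degree argument for $n>m$.

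\textbf{Step 1 (case $n<m$).} We would first show that $\Theta_m(y)$ can be rewritten, by a Whipple/Sheppard-type ${}_3F_2$ transformation, as a combination whose integral against $W_m$ plus the masses annihilates $(-x)_k(x-2\beta-1)_k$ for $k<m$. Concretely, we expand $\Theta_m$ as $\sum_{r\le m}$ of terms $(b-iy)_m\,(x-2\beta-1)_r\,(x+\alpha-\beta-1-m)_r/(\cdots)_r$. Each product $(-x)_k(x-2\beta-1)_k\cdot(\text{term}_r)\,W_m(y)$ is then a continuous dual Hahn–type weight with shifted parameters. Its total mass (integral plus residue points) is given by the Barnes-type evaluation
\[
\frac{1}{2\pi}\int_0^\infty\Big|\frac{\Gamma(a+iy)\Gamma(b+iy)\Gamma(c+iy)}{\Gamma(2iy)}\Big|^2dy=\Gamma(a+b)\Gamma(a+c)\Gamma(b+c),
\]
which remains valid for $c<0$ after adding the residues at $y=i(c+j)$. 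These residues produce exactly the masses $\lambda^{(m)}_j$ in the statement, which are the standard continuous dual Hahn masses. Summing over $r$ gives a terminating ${}_3F_2(1)$ or ${}_4F_3(1)$. We would then show it vanishes for $k<m$ by Saalschütz, using that $-m$ is the terminating parameter and $k<m$ leaves a factor $(k-m+1)_{m}$-type zero.

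\textbf{Step 2 (case $n>m$).} Here we would swap roles. The factor $(b-iy)_m/\big[(x+\alpha-\beta-1-m)_m(\alpha+\beta-m-x)_m\big]$ combines with $W$ to give $W_m$, so we write $\Theta_m W_m=\mathcal{G}_m W$ using (\ref{eq:factorization-GW}). We then expand $\mathcal{G}_m$ via its ${}_3F_2$ sum in terms of $(x-2\beta-1)_r(x+\alpha-\beta-1-m)_r$ and use the duality remarked after Proposition \ref{prop:IdentificationNegative}. An alternative, which we would try first, is to use the algebraic route: $\bra{d_n^-}^*|Z\ket{d_m^-}=\delta_{nm}$ from (\ref{eq:InnerProductGEVP}), together with a spectral resolution of identity for $V$ on $\ket{v_x^-}$, which is exactly the continuous dual Hahn measure $W$. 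In this picture $\mathcal{G}_m(x)$ should be identified, up to normalization, with the coefficient of $Z\ket{d_m^-}$ on the $V$-eigenbasis, via $\bra{w^-}$-expansions. The relation would then follow from inserting the resolution into (\ref{eq:InnerProductGEVP}) and checking the normalization $h_n$ against $\kappa_n$ and the prefactor in Proposition \ref{prop:IdentificationNegative}.

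\textbf{Step 3 (diagonal constant).} For $n=m$ we compute the leading coefficient of $\mathcal{F}_n$ in $y^2$ times the moment from Step 1 at $k=m$. The Barnes evaluation with parameters $(a,b,c_m)$ and a Saalschütz sum should give $h_n=n!\,\Gamma(\eta-2\alpha)\Gamma(\eta-\alpha+\beta)\Gamma(\alpha-\beta-1-n)/(\alpha-\beta-n)_n$. Here $a+b=\eta-2\alpha$, and $b+c_m$, $a+c_m$ produce the other Gammas after Pochhammer simplification.

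The main obstacle is Step 2 (or equivalently justifying the analytic resolution of identity): showing vanishing for $n>m$ needs either a clean contiguous/transformation identity exhibiting $\mathcal{G}_m W$ as orthogonal to degree-$\le m$ ${}_3F_2$ structures, or rigorous control of the residues when $c_m<0$, where contour deformation past the poles of $\Gamma(c_m\pm iy)$ must be tracked carefully to recover exactly the $J_m$ mass points.
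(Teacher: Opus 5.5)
Your proposal has a genuine gap exactly where you locate it. The case $n>m$, which is where the biorthogonality actually lives, is not proved. The ``swap roles'' idea is not a degree argument as it stands.

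Your fallback, a resolution of the identity inserted into $\langle {d_n^-}^*|Z|d_m^-\rangle=\delta_{nm}$, is the route the paper takes, but all of its substance is missing:
\begin{itemize}
\item There is no ready-made resolution of the identity for $V$ over $|v_x^-\rangle$ with measure $W$. The paper resolves the identity over the auxiliary basis, $\sum_k|w_k^-\rangle\langle {w_k^-}^\dagger|=\mathrm{Id}$. This brings in the continuous dual Hahn measure with third parameter $c=\tfrac12-\mu$, not $c_0=\alpha-\tfrac12$.
\item It then needs the overlaps $\Omega_{k,m}=\langle {w_k^-}^\dagger|Z|d_m^-\rangle$.
\item It needs the closed-form summation of the expansion $G_m$ of $Z|d_m^-\rangle$ in continuous dual Hahn polynomials. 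This uses the Euler Beta integral, the generating function and Gauss sums, under the auxiliary condition $\mathrm{Re}(\mu+\alpha)<1$.
\item It uses the cancellation of $|\Gamma(c+iy)|^2$, which removes $\mu$ and produces $\mathcal{G}_mW$.
\item Finally it needs an analytic continuation in $c$ to reach $m\ge\alpha-\tfrac12$.
\end{itemize}
None of this is supplied.

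Your Step 1 mechanism also fails as described. The individual terms of $\Theta_m$ are not even in $y$. Multiplying them by $(a-iy)_k(a+iy)_kW_m$ gives unbalanced products such as $(a+iy)_r\Gamma(a+k+iy)\Gamma(a+k-iy)$ or $\Gamma(c_m+r+iy)\Gamma(c_m-iy)$. These are not continuous dual Hahn weights, so the total-mass formula cannot be applied term by term over $(0,\infty)$.

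The missing idea is that both families are continuous dual Hahn polynomials whose third parameters differ by integers. This would let your own plan close in a few lines, more directly than the paper's argument.
\begin{itemize}
\item With $a=-\beta-\tfrac12$ and $c_k=\alpha-\tfrac12-k$, one has $-x=a-iy$, $x-2\beta-1=a+iy$, $\eta-2\alpha=a+b$ and $\alpha-\beta-n=a+c_{n-1}$. So $\mathcal{F}_n={}_3F_2(-n,a+iy,a-iy;a+b,a+c_{n-1};1)$ is the continuous dual Hahn polynomial with parameters $(a,b,c_{n-1})$.
\item Since $\alpha-\beta-1-m=a+c_m$, the transformation ${}_3F_2(-m,A,B;D,E;1)=\frac{(E-B)_m}{(E)_m}\,{}_3F_2(-m,D-A,B;D,1+B-E-m;1)$ gives $\Theta_m=(a+b)_m\,{}_3F_2(-m,a+iy,a-iy;a+b,a+c_m;1)$. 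This is the continuous dual Hahn polynomial with parameters $(a,b,c_m)$. In particular $\Theta_m$ is itself a polynomial of degree $m$ in $y^2$, not only after removing $(b-iy)_m$.
\end{itemize}

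Write $L_c$ for the continuous dual Hahn functional with parameters $(a,b,c)$, meaning the integral plus the standard masses, i.e.\ its continuation in $c$. The left-hand side of the theorem is then $L_{c_m}[\mathcal{F}_n\Theta_m]$.
\begin{itemize}
\item For $n<m$ it vanishes by orthogonality of $\Theta_m$.
\item For $n>m$, use $L_{c+k}[f]=L_c[f\,(c+iy)_k(c-iy)_k]$ with $c=c_{n-1}$ and $k=n-1-m$. This is an identity of integrals for $c>0$, hence valid for all $c$ by continuation. It pairs $\mathcal{F}_n$ with a polynomial of degree $n-1$ in $y^2$ under its own functional $L_{c_{n-1}}$, so it vanishes.
\item For $n=m$, the ratio of leading coefficients times the norm $m!\,\Gamma(m+a+b)\Gamma(m+a+c_m)\Gamma(m+b+c_m)$ gives exactly $h_m$.
\end{itemize}
The masses in the statement are precisely the standard continuous dual Hahn masses for the negative parameter $c_m$.
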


In view of \eqref{eq:factorization-GW}, when $m<\alpha-\frac12$ the sum in \eqref{eq:biorthogonality-F} is empty and the relation takes the simpler form
\begin{equation}\label{eq:biorthogonality-F-simple}
\frac{1}{2\pi}\int_{0}^{\infty} \mathcal{F}_n\left(\beta+\tfrac12+iy\right)\mathcal{G}_m\left(\beta+\tfrac12+iy\right)W(y)\,dy=h_n\,\delta_{n,m},
\qquad 0\le m<\alpha-\tfrac12,
\end{equation}
with the single, $m$-independent weight $W$ of \eqref{eq:weight-F}. Equivalently, and in a form that covers all $m$ at once: the left-hand side of \eqref{eq:biorthogonality-F} is the analytic continuation in $c=\tfrac12-\mu$, from $c>0$ to $c=c_m$, of the continuous dual Hahn functional with parameters $(a,b,c)$ applied to $\mathcal{F}_n\Theta_m$; the terms \eqref{eq:masses} are the mass points that this continuation produces when $c$ crosses the origin.

Note that for $n=m=0$ the relation \eqref{eq:biorthogonality-F} reduces to the evaluation $\frac{1}{2\pi}\int_0^{\infty}W(y)dy=\Gamma(\eta-2\alpha)\Gamma(\alpha-\beta-1)\Gamma(\eta-\alpha+\beta)$ of the total mass of the continuous dual Hahn weight, as it should.

The proof of Theorem \ref{thm:biorthogonality-F} rests on the following identity
\begin{equation}\label{eq:resolutionIdentity}
    \sum_{k\geq 0} \ket{w_k^-}\bra{{w_k^-}^\dagger}|=\operatorname{Id},
\end{equation}
and the connection with the continuous dual Hahn polynomials proved in Proposition \ref{prop:IdentificationDualHahn}. Recall that we are in the case $\delta=-1$, so that 
\begin{equation}
   \ket{w_k^-}=\sum_{j=k}^\infty a_k\cdots a_{j-1}\frac{(-\mu-\beta)_j}{(-\mu-\beta)_k(j-k)!}\ket{j}, \quad \bra{{w_k^-}^\dagger}|=\frac{(-\mu-\beta)_k(-1)^k}{k!}\sum_{j=0}^k a_j\cdots a_{k-1} \frac{(-k)_j}{(-\mu-\beta)_j}\bra j^\dagger |. 
\end{equation}
The normalized continuous dual Hahn polynomials $\tilde{q}_n(x)$, with our parameters $(a,b,c)$ given in \eqref{eq:CDH-parameters}, are  
\begin{equation}\label{eq:normalized-R}
\tilde{q}_k(y^2)=\hg{3}{2}\argu{-k,-x,x-2\beta-1}{-\mu-\beta,\eta-2\alpha}{1},
\end{equation}
so that $\tilde{q}_k(y^2)=\frac{k!(-1)^k}{(a+c)_ka_0\cdots a_{k-1}}\bra{{w_k^-}^\dagger}\ket{v^-_x}$, The following measure on $(0,+\infty)$
\begin{equation}\label{eq:sigma-mu}
d\sigma_{\mu}(y)=\frac{1}{2\pi}\left|\frac{\Gamma(a+iy)\Gamma(b+iy)\Gamma(c+iy)}{\Gamma(2iy)}\right|^2 dy,
\end{equation}
with the parameters \eqref{eq:CDH-parameters}, is known  to give the orthogonality relation for the continuous dual Hahn polynomials \cite{koekoek2010hypergeometric}, i. e., 
\begin{equation}\label{eq:hhat}
\int_0^{\infty}\tilde{q}_k(y^2) \tilde{q}_l(y^2) \,d\sigma_\mu(y)=\tilde{h}_k\,\delta_{k,l},\ 
\text{with }\tilde{h}_k=\frac{k!\,\Gamma(k+a+b)\Gamma(k+a+c)\Gamma(k+b+c)}{\left[(a+b)_k(a+c)_k\right]^2}.
\end{equation}

We first start with a couple of technical lemmas in order to prove Theorem \ref{thm:biorthogonality-F}.

\begin{lemma}\label{lem:overlaps}
For $k,m\in\dN$,
\begin{equation}\label{eq:overlaps}
    \Omega_{k,m}:=\bra{ {w_k^-}^\dagger}|~Z\ket{d^-_m}=-a_m\cdots a_{k-1}\frac{(-1)^k(-k)_m\,(m+1-\mu-\alpha)_{k-m}}{k!}.
\end{equation}
\end{lemma}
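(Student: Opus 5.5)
The plan is to compute $\Omega_{k,m}$ directly from the explicit formulas, first replacing $Z\ket{d_m^-}$ by its two-term expansion. Throughout $\delta=-1$, and all normalization constants are set to $1$. By \eqref{eq:ZactionOn dn},
\begin{equation*}
\Omega_{k,m}=-\,T_{k,m}+a_m\,T_{k,m+1},\qquad T_{k,m}:=\bra{{w_k^-}^\dagger}\ket{d_m^-}.
\end{equation*}
So it suffices to find a closed form for the overlaps $T_{k,m}$ and then combine the two terms.

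To compute $T_{k,m}$, I will insert the expansion of $\bra{{w_k^-}^\dagger}|$ recalled just before \eqref{eq:normalized-R} and the expansion \eqref{eq:ExpressionDn-} of $\ket{d_m^-}$, and use $\langle j^\dagger\ket{\ell}=\delta_{j\ell}$. Only indices $m\le j\le k$ contribute, so $T_{k,m}=0$ when $k<m$, consistent with the factor $(-k)_m$ in \eqref{eq:overlaps}. The products $a_j\cdots a_{k-1}$ and $a_m\cdots a_{j-1}$ merge into $a_m\cdots a_{k-1}$, which comes out of the sum. Writing $j=m+i$ and using $(-k)_{m+i}=(-k)_m(m-k)_i$ and $(-\mu-\beta)_{m+i}=(-\mu-\beta)_m(m-\mu-\beta)_i$, the remaining sum is a terminating ${}_2F_1$:
\begin{equation*}
T_{k,m}=a_m\cdots a_{k-1}\frac{(-1)^k(-\mu-\beta)_k(-k)_m}{k!\,(-\mu-\beta)_m}\;\hg{2}{1}\argu{m-k,\ \alpha-\beta}{m-\mu-\beta}{1}.
\end{equation*}
By Chu--Vandermonde this ${}_2F_1$ equals $(m-\mu-\alpha)_{k-m}/(m-\mu-\beta)_{k-m}$. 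Since $(-\mu-\beta)_m(m-\mu-\beta)_{k-m}=(-\mu-\beta)_k$, all the $(-\mu-\beta)$ factors cancel, giving
\begin{equation*}
T_{k,m}=a_m\cdots a_{k-1}\frac{(-1)^k(-k)_m(m-\mu-\alpha)_{k-m}}{k!}.
\end{equation*}

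To finish, I substitute this into $\Omega_{k,m}=-T_{k,m}+a_mT_{k,m+1}$. The product $a_{m+1}\cdots a_{k-1}$ in $T_{k,m+1}$, multiplied by $a_m$, restores the common prefactor $a_m\cdots a_{k-1}(-1)^k/k!$. After factoring out $(-k)_m(m+1-\mu-\alpha)_{k-m-1}$, the bracket is
\begin{equation*}
-(m-\mu-\alpha)+(m-k)=-(k-\mu-\alpha).
\end{equation*}
This yields $-(-k)_m(m+1-\mu-\alpha)_{k-m}$, which is exactly \eqref{eq:overlaps}.

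The only delicate point is bookkeeping at the boundary cases. When $k=m$, the term $T_{k,m+1}$ vanishes and $(m+1-\mu-\alpha)_0=1$, so the formula still holds. When $k<m$ both sides vanish. Genericity of the parameters, so that no $(-\mu-\beta)_j$ vanishes, is assumed as elsewhere in the section. I expect the main work to be the Chu--Vandermonde reduction; everything else is direct algebra.
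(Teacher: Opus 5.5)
Your proposal is correct and follows essentially the paper's route: compute $\bra{{w_k^-}^\dagger}\ket{d_m^-}=a_m\cdots a_{k-1}(-1)^k(-k)_m(m-\mu-\alpha)_{k-m}/k!$ by Chu--Vandermonde, then combine two such overlaps through the two-term action of $Z$ on $\ket{d_m^-}$. You are right to keep the coefficient $a_m$ in $Z\ket{d_m^-}=-\ket{d_m^-}+a_m\ket{d_{m+1}^-}$; the paper's one-line proof omits it, but it is needed for the two terms to recombine into the common prefactor $a_m\cdots a_{k-1}$.
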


\begin{proof}
A direct computation using \eqref{eq:ExpressionDn-} and \eqref{eq:normalized-R} together with the Chu--Vandermonde summation gives
\begin{equation}
\bra{{w_k^-}^\dagger}\ket{d^-_m}=a_m \cdots a_{k-1}\frac{(-1)^k(-k)_m\,(m-\mu-\alpha)_{k-m}}{k!},
\end{equation}
and the result follows directly using $Z\ket{d^-_m}=-\ket{d^-_m}+\ket{d^-_{m+1}}$. 
\end{proof}

\begin{lemma}\label{lem:G-closed-form}
Assume $\mathrm{Re}(\mu+\alpha)<1$, $\eta-\alpha+\beta>0$ and $0\le m<\alpha-\frac12$. Then the series
\begin{equation}\label{eq:def-Gm-series}
G_m(y):=\sum_{k\geq 0}\frac{k!(-1)^k}{a_m\cdots a_{k-1}(a+c)_k}\frac{\tilde{q}_k(y^2)}{\tilde{h}_k}\,\Omega_{k,m},\qquad x=\beta+\tfrac12+iy,
\end{equation}
converges and admits the closed form
\begin{multline}\label{eq:G-closed-form}
G_m(y)=\frac{(-1)^{m+1}}{\Gamma(\eta-2\alpha)\Gamma(\eta-\alpha+\beta)\Gamma(\alpha-\beta-1-m)}
\left|\frac{\Gamma(\alpha-\frac12-m+iy)}{\Gamma(c+iy)}\right|^{2} \\
\times (b-iy)_m\;\hg{3}{2}\argu{-m,\;a+iy,\;\alpha-\frac12-m+iy}{\alpha-\beta-1-m,\;1-b-m+iy}{1}.
\end{multline}
\end{lemma}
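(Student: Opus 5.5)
The plan is to simplify the series term by term, recognise it as an expansion in continuous dual Hahn polynomials, and then compute that expansion directly.

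\emph{Reducing the series.} Insert Lemma \ref{lem:overlaps} into \eqref{eq:def-Gm-series}. The products $a_m\cdots a_{k-1}$ cancel, and since $(-k)_m=0$ for $k<m$ the sum starts at $k=m$. This leaves a series free of the representation data $a_n$:
\begin{equation*}
G_m(y)=-\sum_{k\ge m}\frac{(-k)_m\,(m+1-\mu-\alpha)_{k-m}}{(a+c)_k\,\tilde h_k}\,\tilde q_k(y^2).
\end{equation*}
By \eqref{eq:hhat}, this is the formal expansion, in the orthogonal basis $\tilde q_k/\tilde h_k$, of a function $g$ whose moments are
\begin{equation*}
\int_0^\infty g\,\tilde q_k\,d\sigma_\mu=-\frac{(-k)_m(m+1-\mu-\alpha)_{k-m}}{(a+c)_k}.
\end{equation*}

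\emph{Guessing and checking $g$.} We take as candidate for $g$ the right-hand side of \eqref{eq:G-closed-form}. Multiplying it by $|\Gamma(c+iy)|^2$ turns the measure $d\sigma_\mu$ into the continuous dual Hahn weight with third parameter $c_m=\alpha-\tfrac12-m$. So we must compute
\begin{equation*}
\frac{1}{2\pi}\int_0^\infty \tilde q_k(y^2)\,(b-iy)_m\,\hg{3}{2}\argu{-m,a+iy,c_m+iy}{\alpha-\beta-1-m,1-b-m+iy}{1}\,W_m(y)\,dy .
\end{equation*}

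To do this, expand the finite ${}_3F_2$ in $j$. Each term $(b-iy)_m\frac{(a+iy)_j(c_m+iy)_j}{(1-b-m+iy)_j}$ simplifies to $(-1)^j(b-iy)_{m-j}(a+iy)_j(c_m+iy)_j$, using $(b-iy)_m=(b-iy)_{m-j}(b-iy+m-j)_j$ and $(1-b-m+iy)_j=(-1)^j(b+m-j-iy)_j$. Each such product only shifts the parameters $a,c_m$ of the weight upward and $b$ downward, via $|\Gamma(a+iy)|^2(a+iy)_j=\Gamma(a+j+iy)\Gamma(a-iy)$. Similarly, expanding $\tilde q_k$ as its ${}_3F_2$ in $\ell$ produces factors $(a+iy)_\ell(a-iy)_\ell$.

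Every resulting integral is then a de Branges--Wilson integral
\begin{equation*}
\frac{1}{2\pi}\int_0^\infty\left|\frac{\prod_{i=1}^4\Gamma(a_i+iy)}{\Gamma(2iy)}\right|^2dy\;=\;\frac{\prod_{i<j}\Gamma(a_i+a_j)}{\Gamma(a_1+a_2+a_3+a_4)}
\end{equation*}
in its asymmetric form (Barnes' second lemma), i.e. with $\Gamma(a_i+iy)$ and $\Gamma(a_i'-iy)$ distinct. After this step, the inner sum over $\ell$ is a balanced ${}_3F_2$ and can be evaluated by Pfaff--Saalsch\"utz. The remaining sum over $j$ should collapse, again by Chu--Vandermonde, to $(-k)_m(m+1-\mu-\alpha)_{k-m}/(a+c)_k$. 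The prefactor $1/(\Gamma(\eta-2\alpha)\Gamma(\eta-\alpha+\beta)\Gamma(\alpha-\beta-1-m))$ is exactly what normalises the Gamma products produced by Barnes' lemma.

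An alternative, probably cleaner, route to the same moments is to use a generating function. Write
\begin{equation*}
\frac{(m+1-\mu-\alpha)_{k-m}(-k)_m}{(a+c)_k}
\end{equation*}
as a terminating hypergeometric in $k$, and sum $\sum_k \tilde q_k(y^2)\,t_k/\tilde h_k$ using the known Poisson-type kernel, i.e. the expansion of $|\Gamma(c'+iy)/\Gamma(c+iy)|^2$ in continuous dual Hahn polynomials. The factor $|\Gamma(c_m+iy)/\Gamma(c+iy)|^2$ in \eqref{eq:G-closed-form} is precisely of this form, with the ${}_3F_2$ accounting for the $m$-fold lowering.

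\emph{Convergence and identification.} The series converges under the hypotheses as follows. By \eqref{eq:hhat}, $\tilde h_k^{-1}$ grows like $\Gamma(k+a+b)\Gamma(k+a+c)/(k!\,\Gamma(k+b+c))$ up to Pochhammer squares. Combined with $(m+1-\mu-\alpha)_{k-m}/(a+c)_k$ and the standard asymptotics of $\tilde q_k$, this gives a coefficient decay of order $k^{\mu+\alpha-1-\dots}$. The conditions $\mathrm{Re}(\mu+\alpha)<1$ and $m<\alpha-\tfrac12$ (so $c_m>0$, $g\in L^2(d\sigma_\mu)$) then give $L^2$-convergence. Completeness of $\{\tilde q_k\}$ in $L^2(d\sigma_\mu)$ (determinate moment problem) identifies the sum with $g$. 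Pointwise convergence follows from the Darboux-type asymptotics in the same regime.

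The main obstacle is the moment evaluation in the second step: organising the double sum so that Saalsch\"utz and Vandermonde apply cleanly. Convergence is secondary.
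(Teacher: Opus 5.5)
\textbf{There is a gap in the moment evaluation.} Your reduction to $G_m=-\sum_{k\ge m}\frac{(-k)_m(m+1-\mu-\alpha)_{k-m}}{(a+c)_k\tilde h_k}\tilde q_k(y^2)$ is correct. The strategy of guessing $g$, checking its moments and concluding by completeness is a legitimate alternative to the paper. The paper instead sums the series directly: Euler's Beta integral for $(m+1-\mu-\alpha)_{k-m}/(b+c)_k$, the identity $(-k)_m s^k=(-1)^m s^m\frac{d^m}{ds^m}s^k$, the continuous dual Hahn generating function, the Leibniz rule and Gauss' sum. The moment evaluation, which carries all the content of your route, does not go through as you describe it.

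Expanding the ${}_3F_2$ termwise gives $(-1)^j(b-iy)_{m-j}(a+iy)_j(c_m+iy)_j$. Absorbed into the weight, this produces $\Gamma(b+m-j-iy)$ and $\Gamma(c_m+j+iy)$, plus an unmatched factor $(a+iy)_j$, against $\Gamma(b+iy)$ and $\Gamma(c_m-iy)$. So each $(j,\ell)$ integrand is not even in $y$, and it is integrated over $(0,\infty)$ only.

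There is no asymmetric de Branges--Wilson formula of the kind you invoke. The relevant evaluation is $\frac1{2\pi}\int_0^\infty|\Gamma(a+iy)\Gamma(b+iy)\Gamma(c+iy)/\Gamma(2iy)|^2dy=\Gamma(a+b)\Gamma(a+c)\Gamma(b+c)$, the three-parameter limit. It uses evenness to pass to a contour over the whole imaginary axis, where the residue series are very-well-poised and summable. With $\Gamma(A_i+iy)\Gamma(B_i-iy)$ and $A_i\neq B_i$ that structure is lost. On a half-line you do not even have a contour integral. Barnes' second lemma is a different integral (five Gamma functions over one, with a balancing condition) and does not fit. So the chain Barnes--Saalsch\"utz--Vandermonde never starts, and the collapse of the $j$-sum is only asserted.

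\textbf{The fix: the polynomial factor is already even.} Use the terminating transformation ${}_3F_2(-n,A,B;D,E;1)=\frac{(E-B)_n}{(E)_n}{}_3F_2(-n,D-A,B;D,1+B-E-n;1)$ and the symmetry of $q_m$ in its parameters. With $c_m=\alpha-\frac12-m$ and $a+c_m=\alpha-\beta-1-m$, this gives
\begin{align*}
(b-iy)_m\,\hg{3}{2}\argu{-m,\;a+iy,\;c_m+iy}{a+c_m,\;1-b-m+iy}{1}
&=(a+b)_m\,\hg{3}{2}\argu{-m,\;a+iy,\;a-iy}{a+b,\;a+c_m}{1}\\
&=(b+c_m)_m\,\hg{3}{2}\argu{-m,\;c_m+iy,\;c_m-iy}{a+c_m,\;b+c_m}{1}.
\end{align*}
So your candidate is a Gamma ratio times the degree-$m$ continuous dual Hahn polynomial with parameters $(a,b,c_m)$.

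Now expand $\tilde q_k$ in $(a+iy)_\ell(a-iy)_\ell$ and the last ${}_3F_2$ in $(c_m+iy)_p(c_m-iy)_p$.
\begin{itemize}
\item Every term is the symmetric mass $\Gamma(a+b+\ell)\Gamma(a+c_m+\ell+p)\Gamma(b+c_m+p)$. This is where $c_m>0$, i.e. $m<\alpha-\frac12$, is needed.
\item The $\ell$-sum is Chu--Vandermonde and gives $(c-c_m-p)_k/(a+c)_k$.
\item The $p$-sum is Chu--Vandermonde again: $\sum_p\frac{(-m)_p}{p!}(c-c_m-p)_k=(-1)^m(-k)_m(c-c_m)_{k-m}$.
\item Since $c-c_m=m+1-\mu-\alpha$ and $(b+c_m)_m\Gamma(b+c_m)=\Gamma(\eta-\alpha+\beta)$, the stated prefactor makes the moment exactly $-(-k)_m(m+1-\mu-\alpha)_{k-m}/(a+c)_k$.
\end{itemize}
Your completeness argument then finishes the proof. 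The same decomposition, combined with the $m=0$ expansion of $|\Gamma(c_m+p+iy)/\Gamma(c+iy)|^2$, is also what your ``alternative'' route needs; as written, that route only covers $m=0$.

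Repaired this way, your route explains the shape of the answer and makes the vanishing for $k<m$ plain orthogonality. The paper's route produces the closed form without having to guess it.

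\textbf{Convergence.} Your decay estimate should read as follows. The coefficient of $\tilde q_k$ is $O(k^{a-c_m-1})$ and $\tilde q_k(y^2)=O(k^{-a})$, up to a $\log k$ at $y=0$. So the terms are $O(k^{-1-c_m})$, and pointwise convergence hinges on $c_m>0$.
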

\begin{proof}
Insert \eqref{eq:overlaps} and \eqref{eq:hhat} in \eqref{eq:def-Gm-series} and use $(-\mu-\beta)_k=(a+c)_k$ to get
\begin{equation}\label{eq:Gm-step1}
G_m(y)=\frac{-1}{\Gamma(a+b)\Gamma(a+c)\Gamma(b+c)}\sum_{k\geq 0}\frac{(-k)_m(m+1-\mu-\alpha)_{k-m}\,(a+b)_k}{k!\,(b+c)_k}\,\hat R_k(x).
\end{equation}
Next, since $b+c=1+\beta-\mu+\eta-2\alpha=(1-\mu-\alpha)+(\eta-\alpha+\beta)$, the Euler Beta integral gives
\begin{equation}
\frac{(m+1-\mu-\alpha)_{k-m}}{(b+c)_k}=\frac{\Gamma(b+c)}{\Gamma(m+1-\mu-\alpha)\,\Gamma(\eta-\alpha+\beta)}\int_0^1 s^{k-\mu-\alpha}\,(1-s)^{\eta-\alpha+\beta-1}\,ds ,
\end{equation}
which converts the $k$-dependent quotient into the $k$-th power $s^{k}$ under the integral sign. Using furthermore $(-k)_m s^k=(-1)^m s^{m}\frac{d^m}{ds^m}s^{k}$ and the generating function of the continuous dual Hahn polynomials \cite{koekoek2010hypergeometric}
\begin{equation}\label{eq:GF-CDH}
\sum_{k\geq0}\frac{q_k(y^2)}{(a+c)_k\,k!}\,s^k=(1-s)^{-b+iy}\hg{2}{1}\argu{a+iy,\;c+iy}{a+c}{s},
\qquad q_k=(a+b)_k(a+c)_k\,\tilde{q}_k ,
\end{equation}
one obtains
\begin{multline}\label{eq:Gm-step2}
G_m(y)=\frac{(-1)^{m+1}}{\Gamma(a+b)\Gamma(a+c)\Gamma(m+1-\mu-\alpha)\Gamma(\eta-\alpha+\beta)}\\ \times\int_0^1 s^{m-\mu-\alpha}(1-s)^{\eta-\alpha+\beta-1}\,\frac{d^m}{ds^m}\Big[(1-s)^{-b+iy}\hg{2}{1}\argu{a+iy,\;c+iy}{a+c}{s}\Big]ds .
\end{multline}
The $m$-th derivative is expanded with the Leibniz rule, using $\frac{d^r}{ds^r}\,{}_2F_1(A,B;C;s)=\frac{(A)_r(B)_r}{(C)_r}{}_2F_1(A+r,B+r;C+r;s)$. Each resulting term is an Euler-type integral which is evaluated via \eqref{eq:IntegralRep3F2} type integrals. More precisely,
\begin{equation}
\int_0^1 s^{e-1}(1-s)^{f-1}\hg{2}{1}\argu{A,B}{C}{s}\,ds=B(e,f)\;\hg{3}{2}\argu{A,B,e}{C,e+f}{1},
\end{equation}
with $e=m+1-\mu-\alpha$ and $f=\alpha-\frac12-m+r+iy$ in the term labeled by $r$ ($0\le r\le m$). The key observation is that in each term the parameter $e+f=c+r+iy$ coincides with the numerator parameter $c+iy+r$ of the shifted ${}_2F_1$, so that the ${}_3F_2$ collapses to a ${}_2F_1$ evaluated at $1$, summed by the Gauss formula:
\begin{equation}
\hg{2}{1}\argu{a+iy+r,\;e}{a+c+r}{1}=\frac{\Gamma(a+c+r)\,\Gamma(\alpha-\frac12-m-iy)}{\Gamma(c-iy)\,\Gamma(\alpha-\beta-1-m+r)} .
\end{equation}
Collecting all factors and recognizing the sum over $r$ as a terminating ${}_3F_2$ of unit argument yields \eqref{eq:G-closed-form}. The stated conditions ensure the convergence of all the integrals involved.
\end{proof}

\begin{proof}[Proof of Theorem \ref{thm:biorthogonality-F}]
We first treat the case $m<\alpha-\frac12$, for which $J_m=0$ and the statement reduces to \eqref{eq:biorthogonality-F-simple}.
Fix $0\leq  m<\alpha-\frac12$ and choose first $\mu$ such that $\mathrm{Re}(\mu+\alpha)<1$ this is possible since $\mu$ is a free parameter of the basis $\{\ket{w_k^-}\}$ and the assertion of the theorem does not involve it. Since the form $\bra{{d^-_n}^*|}$ is supported on $\bra{0^\dagger}|,\dots,\bra{n^\dagger}|$ and the forms $\bra{{w_k^-}^\dagger|}$, $0\le k\le n$, constitute a triangular basis of the space of such forms, there are unique coefficients $g_{n,k}=\bra {d_n^-}^\star\ket{w_k^-}$ such that
\begin{equation}
\bra{{d^-_n}^*}|=\sum_{k=0}^n g_{n,k}\bra{{w_k^-}^\dagger|},
\qquad\text{leading to}\qquad
\mathcal{F}_n(x)=\sum_{k=0}^n g_{n,k}\,\hat R_k(x).
\end{equation}
Using \eqref{eq:resolutionIdentity}, \eqref{eq:hhat} and Lemma \ref{lem:G-closed-form}, the interchange of summation and integration being justified by the convergence conditions,
\begin{equation}
\int_0^{\infty}\mathcal{F}_n\,G_m\,d\sigma_\mu
=\sum_{k=0}^{n} g_{n,k}\,\Omega_{k,m}
=\bra{{d^-_n}^*}|Z\ket{d^-_m}=\kappa_n\,\delta_{n,m},
\end{equation}
the last equality being obtained using Equation \eqref{eq:InnerProductGEVP}. It remains to substitute the closed form \eqref{eq:G-closed-form} of $G_m$. Since
\begin{equation}
\left|\Gamma(\alpha-\tfrac12-m+iy)\right|^{2}=\frac{\left|\Gamma(\alpha-\tfrac12+iy)\right|^{2}}{(x+\alpha-\beta-1-m)_m\,(\alpha+\beta-m-x)_m}
\end{equation}
on the line $x=\beta+\frac12+iy$, one has
\begin{equation}
G_m(y)\,d\sigma_\mu(y)=\frac{(-1)^{m+1}}{\Gamma(\eta-2\alpha)\Gamma(\eta-\alpha+\beta)\Gamma(\alpha-\beta-1-m)}\;\mathcal{G}_m(x)\,\frac{W(y)}{2\pi}\,dy,
\end{equation}
the factors $|\Gamma(c+iy)|^{2}$ canceling between \eqref{eq:G-closed-form} and \eqref{eq:sigma-mu}. The parameter $\mu$ has completely disappeared, and the resulting identity is precisely \eqref{eq:biorthogonality-F-simple} with $h_n$ as in \eqref{eq:hn-F}, upon using $(1+\beta-\alpha)_n=(-1)^n(\alpha-\beta-n)_n$; by \eqref{eq:factorization-GW} this is \eqref{eq:biorthogonality-F} in the case at hand.

For arbitrary $m$ the argument is unchanged up to the last step. Both sides of \eqref{eq:biorthogonality-F} are, for fixed $n$ and $m$, analytic in the parameters; the left-hand side of \eqref{eq:biorthogonality-F-simple}, written as in \eqref{eq:factorization-GW}, is the continuous dual Hahn functional with parameters $(a,b,c)$ evaluated at $c=c_m$ on the function $\mathcal{F}_n\Theta_m$ which is independent of $c$. That functional is analytic in $c$ on $\mathrm{Re}\,c>0$, and its analytic continuation past $c=0$ is obtained by deforming the contour so as to keep the poles of $\Gamma(c\pm iy)$ on their original sides. The poles that must be recaptured are those at $y=\mp i(c+j)$ with $c+j<0$; writing the integral as $\frac{1}{4\pi i}\int_{-i\infty}^{i\infty}(\cdots)\,dz$ with $z=iy$, each pair contributes the residue of
\begin{equation}
\frac{\Gamma(a+z)\Gamma(b+z)\Gamma(c+z)\Gamma(a-z)\Gamma(b-z)\Gamma(c-z)}{\Gamma(2z)\Gamma(-2z)}
\end{equation}
at $z=-c-j$, which is
\begin{equation}
\frac{(-1)^j}{j!}\,\frac{\Gamma(a-c-j)\Gamma(b-c-j)\Gamma(a+c+j)\Gamma(b+c+j)\Gamma(2c+j)}{\Gamma(-2c-2j)\Gamma(2c+2j)} .
\end{equation}
Rewriting the shifted Gamma functions in terms of Pochhammer symbols through $\Gamma(a-c-j)=\Gamma(a-c)/(a-c-j)_j$, $(a-c-j)_j=(-1)^j(1+c-a)_j$, $\Gamma(-2c-2j)=\Gamma(-2c)/(1+2c)_{2j}$ and $(1+2c)_{2j}/(2c+j)_j=(2c)_j(c+1)_j/(c)_j$ turns this residue into \eqref{eq:masses}, evaluated at $c=c_m$. Since $\mathcal{F}_n\Theta_m$ is regular at these points, the continuation of the functional is the left-hand side of \eqref{eq:biorthogonality-F}, while its value $h_n\delta_{n,m}$ is unchanged, being analytic in the parameters throughout.
\end{proof}

\begin{remark}
The threshold $m=\alpha-\frac12$ is not an artefact of the method. The functions $\Gamma(c_m\pm iy)$ have poles at $y=\mp i(c_m+j)$; these are off the contour for every $m$, but they pinch the origin and cross as $c_m$ changes sign, and the masses \eqref{eq:masses} are exactly the residues they leave behind. The mass points $x_{m,j}$ lie on the quadratic lattice attached to the eigenvalues $\mu(x)$, and each of them is a pole of $\mathcal{G}_m$, which is why the factorized form \eqref{eq:factorization-GW}, in which $\Theta_m$ is regular there, is the right one in which to state the general relation.
\end{remark}

\begin{remark}
It is natural to ask how \eqref{eq:biorthogonality-F} sits with respect to the general biorthogonality of Gupta and Masson \cite{gupta1998contiguous}, which stands at the top of a scheme of rational biorthogonalities and from which many others are obtained by degeneration. The two are of different types in the following sense. Their functions are of the $R_{II}$ type of Ismail and Masson \cite{ismail1995generalized}, with two interpolation points at each step and \emph{both} biorthogonal families rational. On the other hand, the recurrence \eqref{eq:RecurrenceRelation Rational Functions F_n} is of the $R_I$ type : one interpolation point, one polynomial family and one rational family. More precisely, solving the recurrence relation \eqref{eq:RecurrenceRelation Rational Functions F_n} for $\mathcal{F}_{n+1}$ puts it in the form $\mathcal{F}_{n+1}=(z-c_n)\mathcal{F}_n-\lambda_n(z-a_n)\mathcal{F}_{n-1}$ (up to normalization) in the variable $z=\mu(x)$, with nodes $a_n=-( \alpha-n)(\alpha-n+1)$. Accordingly, the rational functions naturally associated with the second series are
\begin{equation}
\frac{\mathcal{F}_n(x)}{\prod_{j=1}^{n}(\mu(x)+(\alpha-j)(\alpha-j+1))}
=\frac{\mathcal{F}_n(x)}{(x+\alpha-\beta-n)_n\,(\alpha+\beta+1-n-x)_n},
\end{equation}
in accordance with the general theory of \cite{zhedanov1999biorthogonal}. Observe that the partner functions $\mathcal{G}_m$ of \eqref{eq:def-G} have their poles at the reflected nodes.
The $R_I$ case is the degeneration of $R_{II}$ in which one interpolation point is sent to infinity, so the present system does sit below theirs in the hierarchy, but that degeneration is not effected by a specialization of the parameters that leaves the lattice untouched. The natural setting for \eqref{eq:biorthogonality-F} is therefore the $R_I$ theory of Ismail and Masson \cite{ismail1995generalized}, of which it provides an explicit continuous instance.
\end{remark}

\section{Outlook}
The two-diagonal representations of the meta Hahn algebra on the space of sequences have been classified and shown to fall into two families, of positive and of negative type, which are connected by transposition. Each of these representations carries at once a set of orthogonal polynomials and a set of biorthogonal rational functions. Which of the two is obtained depends only on the spectral problem that is posed: the overlaps between the eigenbases of $V$ and of $W=X+\mu Z$ give the continuous Hahn and the continuous dual Hahn polynomials, while the overlaps involving the solutions of the generalized eigenvalue problem for the pair $(X,Z)$ give rational functions. In both instances the bispectral properties, i.e., the three term recurrence relations and difference equations, follow from the algebra alone, by pairing the tridiagonal actions of the generators on one basis with their actions on the other.

This is the picture that the notion of a meta algebra was designed to produce, and the continuous representations show it to be robust: the mechanism that yields the Hahn polynomials and the Hahn rational functions together in the finite-dimensional setting operates unchanged on infinite-dimensional spaces. What does change is the analytic character of the orthogonality. In the finite-dimensional case the biorthogonality is a finite sum over a discrete grid, here the two relations obtained are integrals, over a vertical line of the complex plane, with a weight built from Gamma functions. In the second case the measure acquires, beyond a threshold in the degree, finitely many mass points, exactly as the continuous dual Hahn measure does when one of its parameters becomes negative. 

Several questions are raised by these results. The $q$-analogue is the most immediate one: the meta $q$-Hahn algebra has been introduced \cite{BBPTTVZZ2026MetaqHahn} and its finite-dimensional representations analyzed, so that the infinite dimensional representations should similarly produce the continuous dual $q$-Hahn polynomials together with rational functions of $q$-Hahn type. One may also ask how far up the scheme the construction goes, that is, whether meta algebras of Racah and of Wilson type admit two-diagonal representations on the space of sequences with the same dual pair of outcomes. Finally, the notion of a Leonard trio $(V,\tilde{V},Z)$ was introduced precisely to axiomatize the situation in which the overlaps between two eigenbases are bispectral and biorthogonal rational functions attached to a generalized eigenvalue problem \cite{CGLMVW2026trios}. Leonard trios, like Leonard pairs, are at present defined on finite-dimensional spaces, and the algebra generated by the trio of Hahn type is the very algebra studied here \cite{CLMV2026trioHahn}. The representations obtained above realize the trio situation on the space of sequences instead. Whether the notion, and the classification it opens the way to, extend to that setting is a natural question.

Quadratic algebras of Hahn type arise as symmetry algebras of superintegrable models and control the bispectral problems attached to them. Whether the meta Hahn algebra, with its distinguished split bases and its generalized eigenvalue problem, plays a comparable structural role in a physical setting is a question we find worth asking.

\section*{Acknowledgments}
The authors are grateful to N.~Cramp\'e and S.~Tsujimoto for stimulating discussions. LV is funded in part by a Discovery Grant from the Natural Sciences and Engineering Research Council (NSERC) of Canada. QL enjoys a postdoctoral fellowship provided by this grant.

\section*{Statements and declarations}

\noindent\textit{Conflict of interest.} The authors have no competing interests to declare that are relevant to the content of this article.

\medskip
\noindent\textit{Data availability.} Data sharing is not applicable to this article, as no datasets were generated or analyzed during the current study.

\bibliographystyle{unsrt} 

\bibliography{Biblio.bib}

\end{document}